\title[Alternative description of symmetric monoidal categories]{An alternative description of symmetric monoidal categories, and symmetric 2-groups}
\author{Josep Elgueta}
\date{} 
\address{Departament de Matem\`atiques, Universitat Polit\`ecnica de Catalunya}
\keywords{symmetric monoidal category, symmetric 2-group, Eilenberg-MacLane cubical complex, abelian group cohomology}
\numberwithin{equation}{section}
\newcommand{\AAA}{\mathbb{A}}
\newcommand{\CC}{\mathbb{C}}
\newcommand{\DD}{\mathbb{D}}
\newcommand{\EE}{\mathbb{E}}
\newcommand{\FF}{\mathbb{F}}
\newcommand{\GG}{\mathbb{G}}
\newcommand{\RR}{\mathbb{R}}
\newcommand{\ZZ}{\mathbb{Z}}
\newcommand{\Aa}{\mathcal{A}}
\newcommand{\Cc}{\mathcal{C}}
\newcommand{\Dd}{\mathcal{D}}
\newcommand{\Asf}{\mathsf{A}}
\newcommand{\Bsf}{\mathsf{B}}
\newcommand{\Gsf}{\mathsf{G}}
\newcommand{\Hsf}{\mathsf{H}}
\newcommand{\Msf}{\mathsf{M}}
\newcommand{\Rsf}{\mathsf{R}}
\newcommand{\csf}{\mathsf{c}}
\newcommand{\hsf}{\mathsf{h}}
\newcommand{\zsf}{\mathsf{z}}
\begin{document}

\maketitle

\begin{abstract}
An equivalent description of a symmetric monoidal category is introduced in which, instead of separate associator and commutator isomorphisms satisfying the usual coherence axioms, we simply have associo-commutator isomorphisms satisfying their own coherence laws. In particular, this yields an alternative description of a symmetric 2-group and leads to a cohomological classification of these objects in terms of Eilenberg-MacLane cubical cohomology for abelian groups.
\end{abstract}

\section{Introduction}

The notion of symmetric monoidal category dates back to the 1960s \cite{MacLane-1963}. It is by now a well established structure used across many areas of mathematics, but also in logic, theoretical computer science and theoretical physics. The notion naturally arises as the categorification of the notion of commutative monoid. Thus instead of a set $M$, a set theoretical map $\cdot:M\times M\to M$ and a distinguished element $1$ satisfying the usual associative, commutative and unital laws  we have a category $\Cc$, a functor $\cdot:\Cc\times\Cc\to\Cc$ and a distinguished object $1$ together with suitable natural associativity, commutativity and left and right unit isomorphisms satisfying their own coherence laws (see \cite{MacLane-1998} for the precise definition).

As a matter of fact, however, a set $M$ equipped with a binary operation $\cdot:M\times M\to M$ and an identity element $1\in M$ with respect to $\cdot$ is a commutative monoid if and only if the following {\em associo-commutative property} holds for every $x,y,z,t\in M$ (for short, the symbol $\cdot$ is omitted):
\begin{equation}\label{associocommutativitat}
(xy)(zt)=(xz)(yt)
\end{equation}
Indeed, if $\cdot$ is associative and commutative we have
\[
(xy)(zt)=((xy)z)t=(x(yz))t=(x(zy))t=((xz)y)t=(xz)(yt).
\]
Conversely, let $\cdot$ be such that (\ref{associocommutativitat}) holds. Since $1$ acts as an identity we have
\[
x(yz)=(x1)(yz)=(xy)(1z)=(xy)z
\]
and 
\[
xy=(1x)(y1)=(1y)(x1)=yx.
\]
This suggests that there should exist an alternative definition of a symmetric monoidal category where, instead of separate associator and commutator isomorphisms satisfying the usual coherence axioms, we simply have {\em associo-commutator} isomorphisms
\[
b(x,y,z,t):(xy)(zt)\overset{\cong}{\longrightarrow}(xz)(yt)
\]
satisfying their own coherence conditions. 

The purpose of this work is to make this structure precise, together with the corresponding notions of morphism and 2-morphism, and to prove that the 2-category so obtained is indeed equivalent (in fact, isomorphic) to the 2-category of symmetric monoidal categories. As a consequence, we obtain alternative descriptions of the derived notions of internal (co)commutative (co)monoid, symmetric 2-group, PROP, operad, dagger symmetric monoidal category, etc.

This work was originally motivated by the corresponding alternative description of symmetric 2-groups (that is, symmetric monoidal groupoids in which every object is invertible up to isomorphism) and by the cohomological classification of these objects up to equivalence. I call the resulting structures {\em AC (Associo-Commutative) 2-groups}. More broadly, my interest lies in categorical analogues of rings and modules over a ring (particularly 2-vector spaces over a field) and in their corresponding cohomological classifications. The notion of categorical ring, or {\em 2-ring}, was first introduced by Quang \cite{Quang-1987}, who called them {\em Ann-categories}. The point is that, for the purpose of classifying 2-rings up to equivalence, it is advantageous to regard the underlying symmetric 2-group as an AC-2-group. Indeed, Jibladze and Pirashvili \cite{Jibladze-Pirashvili-2007} proved that 2-rings are classified by MacLane's cohomology of rings, introduced by this author in the 1950s to classify the extensions of rings \cite{MacLane-1958b}. More precisely, they proved that, up to equivalence, a 2-ring $\RR$ is completely specified by a triple $(\Rsf,\Msf,\zsf)$ consisting of a ring $\Rsf$, an $\Rsf$-bimodule $\Msf$ and a MacLane 3-cocycle of $\Rsf$ with coefficients in $\Msf$. However, unlike 3-cocycles in classical group cohomology, which are simply given by a map $\zsf:R^3\to M$ satisfying the usual 3-cocycle condition, a MacLane 3-cocycle $\zsf$ consists of a quadruple of maps $(\zsf_+,\zsf_\cdot,\zsf_{\cdot +},\zsf_{+\cdot})$, with $\zsf_+:R^4\to M$ and $\zsf_\cdot,\zsf_{\cdot +},\zsf_{+\cdot}:R^3\to M$, subject to a collection of equations relating them, together with certain normalizations conditions (see \cite{Jibladze-Pirashvili-2007}, \cite{Quang-2013}). In particular, the map $\zsf_+$ turns out to be a (normalized) 3-cocycle in the Eilenberg-MacLane cubical complex \cite{Eilenberg-MacLane-1950-I}, and this is precisely the invariant that classifies the underlying symmetric 2-group of $\RR$ when thought of as an AC-2-group. It seems likely that identifying the right cohomology classifying 2-modules over a given 2-ring will be easier in terms of this invariant than in terms of the pair $(\hsf,\csf)$ providing the usual cocycle description, up to equivalence, of a symmetric (or braided) 2-group (\cite{Sinh-1975}, \cite{Joyal-Street-1993}); see \S~\ref{comentari_final}. 

The paper is organized as follows. In Section 2, we introduce the notion of an AC-category, together with the corresponding notions of AC-functor and AC-natural transformation, and prove that the resulting 2-category is equivalent (in fact, isomorphic) to the 2-category of symmetric monoidal categories (cf. Theorem~\ref{teorema_equivalencia}). As a consequence, we obtain the corresponding strictification theorem for AC-categories, according to which every AC-category is equivalent to a unital one whose associo-commutator $b(x,y,z,t)$ is the identity whenever either of the middle arguments $y$ or $z$ is the unit object. We also make the relationship between these AC-categories and the symmetric strict monoidal categories precise (cf. Proposition~\ref{AC-categories_semistrictes}). In Section 3, we recall the definition of the (normalized) cubical complex of an abelian group, due to Eilenberg and Mac Lane, and the associated cubical cohomology of abelian groups. Finally, in Section 4, we show that AC-2-groups are classified by this cubical cohomology (cf. Theorem~\ref{classificacio_AC-2-grups}), providing an alternative cohomological description of symmetric 2-groups. 

\subsection*{Related work}
The alternative description discussed in this work is valid only for symmetric monoidal categories. In fact, the basic coherence law required of the associo-commutators automatically implies the symmetry condition (cf. Proposition~\ref{cond_simetria} below). For an alternative description of arbitrary braided monoidal categories, the reader is referred to \cite{Davydov-Runkel-2015}. In this work, the authors introduce the notion of (right) \emph{unital $b$-category} and show that it is equivalent to the classical notion of braided monoidal category (\cite{Davydov-Runkel-2015}, Theorem~2.4). Still another alternative description of braided monoidal categories can be found in \cite{Aguiar-Mahajan-2010}. In Chapter~6, the authors introduce the notion of \emph{2-monoidal category}. Roughly speaking, it is a category $\mathcal C$ simultaneously equipped with two monoidal structures $(\cdot,1,a,l,r)$ and $(\cdot',1',a',l',r')$ along with natural morphisms
\begin{align*}
\zeta_{x,y,z,t}&:(x\cdot y)\cdot'(z\cdot t)\to(x\cdot' z)\cdot(y\cdot' t)
\\
\Delta_1&:1\to 1\cdot' 1
\\
\mu_{1'}&:1'\cdot 1'\to 1'
\\
\epsilon&:1\to 1'
\end{align*}
satisfying the appropriate coherence conditions. It is called \emph{strong} when these morphisms are invertible. In Propositions~6.10-6.11 it is shown that a braided monoidal category is the same as a \emph{strong} 2-monoidal category. In fact, the authors also prove (Proposition~6.13) that symmetric monoidal categories are the same as strong \emph{braided} 2-monoidal categories --that is, strong 2-monoidal categories whose monoidal structures are actually braided. However, the notion of AC-category introduced below looks like a significantly cleaner alternative presentation of a symmetric monoidal category.

Finally, let us mention the work by Došen and Petrić \cite{Dorsen-Petric-2007}, where they also provide an alternative description of symmetric monoidal categories in terms of isomorphisms of the form $(xy)(zt)\cong(xz)(yt)$, which they term \emph{medial commutativities}. Their approach differs substantially from ours and is somewhat more intricate to follow.

\subsection*{Acknowledgements}

I thank the anonymous referee for their careful reading, for helpful suggestions that improved the presentation, and for pointing out an error in the proof of Proposition~\ref{AC-categories_semistrictes}. I am also grateful to Nathanael Arkor for drawing my attention to the work of Došen and Petrić as well as the notion of 2-monoidal categories. Remark~\ref{remarca_final} was likewise motivated by his comments.

\section{An alternative description of a symmetric monoidal category}

We assume the reader is already familiar with the notions of symmetric monoidal category, symmetric monoidal functor and monoidal natural transformation. Some helpful references are \cite{MacLane-1998}, \cite{Aguiar-Mahajan-2010}, \cite{Yau-2024-I}.

\begin{definition}\label{AC-categoria}
An {\em  AC-category} is a sixtuple $\CC=(\Cc,\cdot,1,b,l,r)$ consisting of:
\begin{itemize}
\item a category $\Cc$;
\item a functor $\cdot:\Cc\times\Cc\to\Cc$ (the {\em product functor});
\item a distinguished object $1$ in $\Cc$ (the {\em unit object});
\item natural isomorphisms $b(x,y,z,t):(xy)(zt)\to(xz)(yt)$ for every objects $x,y,z,t$ in $\Cc$ (the {\em associo-commutators});
\item natural isomorphisms $l_x:1x\to x$ for every object $x$ in $\Cc$ (the {\em left unitors});
\item natural isomorphisms $r_x:x1\to x$ for every object $x$ in $\Cc$ (the {\em right unitors}).
\end{itemize}
Moreover, these data must satisfy the following axioms (for short, we omit the symbol $\cdot$ between objects):
\begin{itemize}
\item[{\rm (acc1)}] {\em ($4\times 4$ axiom)} for every objects $x,y,z,t,x',y',z',t'$ in $\Cc$ the following diagram commutes:
\[
\xymatrix@C=4pc{
((xy)(zt))((x'y')(z't'))\ar[rr]^-{b(xy,zt,x'y',z't')}\ar[d]_-{b(x,y,z,t)\cdot b(x',y',z',t')}
&&
((xy)(x'y'))((zt)(z't'))\ar[d]^-{b(x,y,x',y')\cdot b(z,t,z',t')} 
\\
((xz)(yt))((x'z')(y't'))\ar[d]_-{b(xz,yt,x'z',y't')} 
&&
((xx')(yy'))((zz')(tt'))\ar[d]^{b(xx',yy',zz',tt')}
\\
((xz)(x'z'))((yt)(y't'))\ar[rr]_-{b(x,z,x',z')\cdot b(y,t,y',t')} && ((xx')(zz'))((yy')(tt'))
}
\] 

\item[{\rm (acc2)}] {\em (unital axioms)} for every objects $x,y$ in $\Cc$ the following diagrams commute:
\begin{equation}\label{eq00}
\xymatrix{
(x1)(y1)\ar[rr]^-{b(x,1,y,1)}\ar[d]_-{r_x\cdot r_y} && (xy)(11)\ar[d]^-{id\cdot l_1}
\\
xy && (xy)1\ar[ll]^-{r_{xy}} }
\quad 
\xymatrix{
(1x)(1y)\ar[rr]^-{b(1,x,1,y)}\ar[d]_-{l_x\cdot l_y} && (11)(xy)\ar[d]^-{r_1\cdot id}
\\
xy && 1(xy)\ar[ll]^-{l_{xy}} }
\end{equation}
\begin{equation}\label{eq0}
\xymatrix{
(xy)(11)\ar[rr]^-{b(x,y,1,1)}\ar[d]_-{id\cdot l_1} && (x1)(y1)\ar[d]^-{r_x\cdot r_y}
\\
(xy)1\ar[rr]_-{r_{xy}} && xy }
\quad 
\xymatrix{
(11)(xy)\ar[rr]^-{b(1,1,x,y)}\ar[d]_-{r_1\cdot id} && (1x)(1y)\ar[d]^-{l_x\cdot l_y} 
\\
1(xy)\ar[rr]_-{l_{xy}} && xy }
\end{equation} 
\item[{\rm (acc3)}] {\em (normalization axiom)} for every objects $x,y$ in $\Cc$ we have
\begin{equation}\label{cond_simetria_b}
b(x,1,1,y)=id_{(x1)(1y)}\,;
\end{equation}
\end{itemize}
An AC-category is called {\em unital} when all left and right unitors $l_x,r_x$ are identities, {\em semistrict} when it is unital and the associo-commutator $b(x,y,z,t)$ is the identity whenever either of the middle arguments $y$ or $z$ is the unit object, and {\em strict} when it is unital and $b(x,y,z,t)$ is the identity for every quadruple $(x,y,z,t)$.
\end{definition}

It turns out that the associo-commutators in every AC-category automatically satisfy the following {\em symmetry condition}.

\begin{proposition}\label{cond_simetria}
For every objects $x,y,z,t$ in $\Cc$ we have
\begin{equation}
b(x,z,y,t)\,b(x,y,z,t)=id_{(xy)(zt)}.
\end{equation} 
\end{proposition}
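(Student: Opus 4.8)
The only substantial axiom available is the $4\times 4$ axiom (acc1); the plan is to specialise it so that, after the unit-related associo-commutators have been trivialised by the normalization axiom (acc3) and rewritten by means of the unital axioms (acc2), the surviving equality of isomorphisms is exactly $b(x,z,y,t)\,b(x,y,z,t)=id_{(xy)(zt)}$. The guiding principle is that both $b(x,y,z,t)$ and its middle-swapped partner $b(x,z,y,t)$ must be forced to appear among the six edges of (acc1). The natural way to arrange this is the \emph{swap substitution} $(x',y',z',t')=(x,z,y,t)$: with this choice the left-hand vertical edge $b(x,y,z,t)\cdot b(x',y',z',t')$ becomes $b(x,y,z,t)\cdot b(x,z,y,t)$, so the two associo-commutators whose composite we wish to control occur side by side.

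First I would carry out this substitution and simplify the remaining five edges. Several of them acquire one of the shapes $b(1,u,1,v)$, $b(u,1,v,1)$, $b(u,v,1,1)$ or $b(1,1,u,v)$ once the outer arguments are sent to the unit (i.e. after additionally setting $x=t=1$), and each of these is pinned down explicitly by the corresponding diagram in (acc2); the edges whose middle pair is a genuine unit pair collapse to identities by (acc3). What survives is the ``pure'' braiding data $b(1,y,z,1)$ and $b(1,z,y,1)$, which is precisely the content of the symmetry condition in the case $x=t=1$. The general case is then recovered either by reinstating $x$ and $t$ and invoking the naturality of $b$, or by running the same specialisation of (acc1) without collapsing the outer arguments and absorbing the extra unitors through (acc2).

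The delicate point, and the step I expect to be the main obstacle, is that in (acc1) the pair $b(x,y,z,t)$, $b(x,z,y,t)$ appears as a \emph{tensor} $b(x,y,z,t)\cdot b(x,z,y,t)$ rather than as the \emph{composite} $b(x,z,y,t)\,b(x,y,z,t)$ that the statement concerns. Converting the one into the other requires placing one of the two tensor factors over a unit object, so that functoriality of $\cdot$ together with the unitors turns that factor into pre- or post-composition; tracking all the resulting left and right unitors and checking that they cancel (using (acc2) and the interchange law for $\cdot$) is the bulk of the bookkeeping. Once this is done, the two boundary paths of (acc1) read as $b(x,z,y,t)\,b(x,y,z,t)$ and $id_{(xy)(zt)}$ respectively, and cancelling the common invertible factors yields the claim.
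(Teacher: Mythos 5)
There is a genuine gap at the central step of your plan. Substituting $(x',y',z',t')=(x,z,y,t)$ into a \emph{single} instance of (acc1) does make the tensor $b(x,y,z,t)\cdot b(x,z,y,t)$ appear, but the remaining five edges do not collapse under (acc2)/(acc3), even after setting $x=t=1$. Indeed, with $u=(1,y,z,1)$ and $u'=(1,z,y,1)$ the other edges of the $4\times 4$ square are $b(1y,z1,1z,y1)$, $b(1z,y1,1y,z1)$ and $b(11,yz,zy,11)$ --- conjugate along unitors to $b(y,z,z,y)$, $b(z,y,y,z)$ and $b(1,yz,zy,1)$ --- besides the (acc2)-pinned pieces $b(1,z,1,y)\cdot b(y,1,z,1)$ and $b(1,y,1,z)\cdot b(z,1,y,1)$. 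The three unpinned components are not identities and are not determined by any axiom at this stage, so what survives is one equation in several unknowns; in cocycle language (additive notation) it reads $\zsf(0,y,z,0)+\zsf(0,z,y,0)=\zsf(0,y{+}z,y{+}z,0)+\zsf(y,z,z,y)-\zsf(z,y,y,z)$, which does not by itself force the left-hand side to vanish. Neither fallback repairs this: ``reinstating $x$ and $t$ by naturality of $b$'' is impossible, since naturality moves components along morphisms and there are in general no morphisms $1\to x$ (naturality can legitimately trade $(x1,y1,1z,1t)$ for $(x,y,z,t)$ along unitors, but not $(1,y,z,1)$ for $(x,y,z,t)$); and keeping the outer arguments general only produces more unpinned terms, such as $b(xx,yz,zy,tt)$. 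In particular the two boundary paths never read as $b(x,z,y,t)\,b(x,y,z,t)$ and $id_{(xy)(zt)}$: in (acc1) the two associo-commutators occur tensored, never composed, and no boundary path trivializes.

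The missing idea --- and the route the paper takes --- is to use (acc1) \emph{twice}: once at an arbitrary pair $(u,u')$ and once at the swapped pair $(\sigma u,\sigma u')$, observing that the auxiliary edges of the two squares match up, since $\alpha(\sigma u,\sigma u')=\gamma(u,u')$, $\beta(\sigma u,\sigma u')=\delta(u,u')$ and conversely. Chaining the two equalities makes the unknown middle edges cancel; the two tensors combine by the interchange law into $S(u)\cdot S(u')$, where $S(v)=b(\sigma v)\,b(v)$, while the other side produces the \emph{composite} $S(u\cdot u')$. One thus obtains that $S(u\cdot u')$ is a conjugate of $S(u)\cdot S(u')$, which is exactly the tensor-to-composite conversion you were seeking; your proposed mechanism of ``placing one factor over a unit object'' is unavailable here because with $u'=\sigma u$ both factors sit over non-unit objects. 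Finally, one specializes not to $u'=\sigma u$ but to $u=(x,y,1,1)$, $u'=(1,1,z,t)$: then $S(u)$ and $S(u')$ are identities by (acc2), the conjugating isomorphism is controlled by (acc3), and $u\cdot u'=(x1,y1,1z,1t)$ is carried to $(x,y,z,t)$ by legitimate naturality along the unitors, giving $S(x,y,z,t)=id$.
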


\begin{proof}
Let $S(x,y,z,,t)$ be the natural isomorphism given by the composite
\[
S(x,y,z,t)=b(x,z,y,t)\,b(x,y,z,t).
\]
We have to see that $S(x,y,z,t)$ is the identity for each $(x,y,z,t)$. For short, let us write $u,u'$ instead of $(x,y,z,t),(x',y',z',t')$, and let $\sigma$ be the map that swaps the second and third arguments, so that $\sigma u$ stands for the quadruple $(x,z,y,t)$, and similarly $\sigma u'$. Thus we have
\[
S(u)=b(u)\,b(\sigma u)
\]
for each quadruple $u$. Let us further introduce the maps $\alpha,\beta,\gamma,\delta$ acting on ordered pairs of quadruples of objects in $\Cc$ by
\begin{align*}
\alpha(u,u')&=(x,z,x',z'),
\\
\beta(u,u')&=(y,t,y',t'),
\\
\gamma(u,u')&=(x,y,x',y'),
\\
\delta(u,u')&=(z,t,z',t')
\end{align*}
if $u=(x,y,z,t)$ and $u'=(x',y',z',t')$. Then according to the $4\times 4$ axiom we have (the product $u\cdot u'$ of quadruples $u,u'$ of objects is defined componentwise)
\begin{align*}
b(u\cdot u')\,[b(\gamma(u,u'))\cdot&b(\delta(u,u'))]\,b(\alpha(u,u')\cdot\beta(u,u'))
\\
&=[b(\alpha(u,u'))\cdot b(\beta(u,u'))]\,b(\gamma(u,u')\cdot \delta(u,u'))\,[b(u)\cdot b(u')]
\end{align*}
for every quadruples $u,u'$. In particular, this is still true when $u,u'$ are replaced by $\sigma u,\sigma u'$. Now, it is easy to check that
\begin{align*}
\alpha(\sigma u,\sigma u')&=\gamma(u,u'),
\\
\beta(\sigma u,\sigma u')&=\delta(u,u'),
\\
\gamma(\sigma u,\sigma u')&=\alpha(u,u'),
\\
\delta(\sigma u,\sigma u')&=\beta(u,u').
\end{align*}
Hence the $4\times 4$ axiom for the quadruples $\sigma u,\sigma u'$ becomes the equality
\begin{align*}
b(\sigma(u\cdot u'))\,&[b(\alpha(u,u'))\cdot b(\beta(u,u'))]\,b(\gamma(u,u')\cdot\delta(u,u'))
\\
&=[b(\gamma(u,u'))\cdot b(\delta(u,u'))]\,b(\alpha(u,u')\cdot\beta(u,u'))\,[b(\sigma u)\cdot b(\sigma u')]
\end{align*}
where we have used that $\sigma u\cdot \sigma u'=\sigma(u\cdot u')$. Combining both equations we have
\begin{align*}
b(u\cdot u')\,&b(\sigma(u\cdot u'))\,[b(\alpha(u,u'))\cdot b(\beta(u,u'))]\,b(\gamma(u,u')\cdot\delta(u,u'))
\\
&=[b(\alpha(u,u'))\cdot b(\beta(u,u'))]\,b(\gamma(u,u')\cdot\delta(u,u'))\,[b(u)\cdot b(u')]\,[b(\sigma u)\cdot b(\sigma u')]
\end{align*}
or equivalently, using that $\cdot$ is functorial,
\begin{align*}
S(u\cdot u')\,&[b(\alpha(u,u'))\cdot b(\beta(u,u'))]\,b(\gamma(u,u')\cdot\delta(u,u'))
\\
&=[b(\alpha(u,u'))\cdot b(\beta(u,u'))]\,b(\gamma(u,u')\cdot\delta(u,u'))\,[S(u)\cdot S(u')].
\end{align*}
Taking now $u=(x,y,1,1)$ and $u'=(1,1,z,t)$ it follows that $S(x,y,z,t)$ is the conjugate of
\[
S(x,y,1,1)\cdot S(1,1,z,t)=[b(x,1,y,1)\,b(x,y,1,1)]\cdot[b(1,z,1,t)\,b(1,1,z,t)]
\]
by the isomorphism
\[
[b(x,1,1,z)\cdot b(y,1,1,t)]\,b(x1,y1,1z,1t)=b(x1,y1,1z,1t),
\]
where we have used the normalization axiom (acc3). By the naturality of the associo-commutator in its arguments applied to the appropriate left and right unitors we finally obtain for $S(x,y,z,t)$ the expression
\begin{align*}
S(x,y,z,t)&=[(r_x^{-1}\cdot l^{-1}_z)\cdot(r_y^{-1}\cdot l_t^{-1})]\,b(x,y,z,t)\,[(r_x\cdot r_y)\cdot (l_z\cdot l_t)]
\\
&\hspace{2truecm}[[b(x,1,y,1)\,b(x,y,1,1)]\cdot[b(1,z,1,t)\,b(1,1,z,t)]]
\\
&\hspace{3truecm}[(r_x^{-1}\cdot r^{-1}_y)\cdot(l_z^{-1}\cdot l_t^{-1})]\,b(x,y,z,t)^{-1}\,[(r_x\cdot l_z)\cdot(r_y\cdot l_t)]
\end{align*}
Now, it readily follows from the unital axioms (acc2) that 
\[
(r_x\cdot r_y)\,b(x,1,y,1)\,b(x,y,1,1)\,(r_x^{-1}\cdot r^{-1}_y)=id_{xy}
\]
\[
(l_z\cdot l_t)\,b(1,z,1,t)\,b(1,1,z,t)\,(l_z^{-1}\cdot l^{-1}_t)=id_{zt}
\]
Hence, by functoriality $S(x,y,z,t)$ is indeed the identity.
\end{proof}

The next two results provide additional commutative diagrams whose commutativity follows from the axioms. These will later be used to show that specifying an AC-category is equivalent to specifying a symmetric monoidal category (cf. Theorem~\ref{teorema_equivalencia} below).

\begin{proposition}\label{lema0}
In every AC-category $\CC=(\Cc,\cdot,1,b,l,r)$ the following diagrams commute:
\begin{equation}\label{eq0'}
\xymatrix{
(11)(x1)\ar[rr]^-{b(1,1,x,1)}\ar[d]_-{r_1\cdot r_x} && (1x)(11)\ar[d]^-{l_x\cdot l_1}
\\
1x\ar[r]_-{l_x} & x & x1\ar[l]^-{r_x} }\quad
\xymatrix{
(1x)(11)\ar[rr]^-{b(1,x,1,1)}\ar[d]_-{l_x\cdot l_1} && (11)(x1)\ar[d]^-{r_\cdot r_x}
\\
x1\ar[r]_-{r_x} & x & 1x\ar[l]^-{l_x} } 
\end{equation} 
Moreover, $l_1=r_1$. 
\end{proposition}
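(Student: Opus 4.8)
The plan is to reduce each square in (\ref{eq0'}) to an identity between composites and to obtain it by specializing one of the unital axioms (acc2), using only naturality of the unitors and bifunctoriality of $\cdot$; the equality $l_1=r_1$ is then extracted separately.

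First I would prove the left-hand square of (\ref{eq0'}). Reading it off, its commutativity amounts to the identity $l_x\,(r_1\cdot r_x)=r_x\,(l_x\cdot l_1)\,b(1,1,x,1)$ of morphisms $(11)(x1)\to x$. To produce it, I specialize the second square in (\ref{eq0}), namely $l_{xy}\,(r_1\cdot id)=(l_x\cdot l_y)\,b(1,1,x,y)$, to $y=1$, obtaining $l_{x1}\,(r_1\cdot id_{x1})=(l_x\cdot l_1)\,b(1,1,x,1)$. I then eliminate $l_{x1}$ by naturality of $l$ at the morphism $r_x\colon x1\to x$, which reads $r_x\,l_{x1}=l_x\,(id_1\cdot r_x)$, and collapse the composite $(id_1\cdot r_x)(r_1\cdot id_{x1})$ to $r_1\cdot r_x$ via the interchange law. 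Substituting and cancelling the isomorphism $r_x$ yields exactly the required identity.

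Next I would deduce the right-hand square of (\ref{eq0'}) from the left-hand one. Its commutativity is the identity $r_x\,(l_x\cdot l_1)=l_x\,(r_1\cdot r_x)\,b(1,x,1,1)$. Applying the symmetry condition (Proposition~\ref{cond_simetria}) to the quadruple $(1,x,1,1)$ gives $b(1,1,x,1)\,b(1,x,1,1)=id$, so $b(1,x,1,1)=b(1,1,x,1)^{-1}$; substituting this together with the left-hand identity already established makes the two sides coincide.

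Finally, to prove $l_1=r_1$, the key observation is that naturality of the unitors alone is symmetric under interchanging $l_1$ and $r_1$, and therefore cannot distinguish them; the argument must invoke a genuine coherence axiom. I would take the first square in (\ref{eq00}), that is $r_x\cdot r_y=r_{xy}\,(id\cdot l_1)\,b(x,1,y,1)$, at $x=y=1$, where the normalization axiom (acc3) forces $b(1,1,1,1)=id$, giving $r_1\cdot r_1=r_{11}\,(id_{11}\cdot l_1)$. On the other hand, naturality of $r$ at $r_1\colon 11\to 1$ gives $r_{11}=r_1\cdot id_1$ after cancelling $r_1$; substituting and using the interchange law turns the previous equation into $r_1\cdot r_1=r_1\cdot l_1$. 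Precomposing with the isomorphism $r_1^{-1}\cdot id_{11}$ reduces this to $id_1\cdot r_1=id_1\cdot l_1$, an equality in the image of the functor $1\cdot(-)$; since $l$ exhibits $1\cdot(-)$ as naturally isomorphic to the identity, this functor is faithful and we conclude $r_1=l_1$. I expect this last step to be the main obstacle: one must realize that naturality is blind to the difference between the two unitors on the unit object, so the conclusion genuinely requires a coherence axiom combined with the faithfulness of $1\cdot(-)$.
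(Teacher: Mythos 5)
Your proof is correct, and it follows the paper's general strategy---specialize the unital axioms (acc2) at unit arguments and clean up with naturality of the unitors and the interchange law---but it takes a slightly different route in two places. For the left square, the paper instantiates the first diagram of (\ref{eq00}) at $(x,y)=(1,x)$ and pastes it onto the naturality square of $r$ at $l_x$, whereas you instantiate the second diagram of (\ref{eq0}) at $y=1$ and use naturality of $l$ at $r_x$; these are mirror-image instantiations of the same axiom block and both work (your final ``cancelling $r_x$'' is even unnecessary: substituting the specialized axiom into $r_x(l_x\cdot l_1)\,b(1,1,x,1)$ already yields $l_x(r_1\cdot r_x)$ directly). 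More substantively, the paper proves the right square of (\ref{eq0'}) independently, by the mirror diagram (second diagram of (\ref{eq00}) at $(x,y)=(x,1)$ plus naturality of $l$ at $r_x$), while you deduce it from the left square via Proposition~\ref{cond_simetria}, using $b(1,x,1,1)=b(1,1,x,1)^{-1}$. This is legitimate---Proposition~\ref{cond_simetria} precedes Proposition~\ref{lema0} in the paper and its proof uses only (acc1)--(acc3) and naturality, so there is no circularity---and it buys you half the diagram-chasing, at the cost of making the second square depend on the symmetry result, which the paper's two parallel direct proofs avoid. Your proof of $l_1=r_1$ is the paper's argument essentially verbatim ($r_{11}=r_1\cdot id_1$ from naturality of $r$ at $r_1$, then $x=y=1$ in the left diagram of (\ref{eq00}) with $b(1,1,1,1)=id$ from (acc3), giving $r_1\cdot r_1=r_1\cdot l_1$), except that you spell out the final step that the paper compresses into ``and hence $r_1=l_1$'': precomposing with $r_1^{-1}\cdot id_{11}$ to get $id_1\cdot r_1=id_1\cdot l_1$ and then invoking faithfulness of $1\cdot(-)$, which holds because $l$ exhibits that functor as naturally isomorphic to the identity. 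That explicit last step is a genuine gain in completeness over the paper's presentation.
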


\begin{proof}
It is a consequence of the commmutative diagrams
\[
\xymatrix{
(11)(x1)\ar[rr]^-{b(1,1,x,1)}\ar[d]_-{r_1\cdot r_x} && (1x)(11)\ar[d]^-{id\cdot l_1}
\\
1x\ar[d]_-{l_x} && (1x)1\ar[ll]^-{r_{1x}}\ar[d]^-{l_x\cdot id}
\\
x && x1\ar[ll]^-{r_x} }\quad
\xymatrix{
(1x)(11)\ar[rr]^-{b(1,x,1,1)}\ar[d]_-{l_x\cdot l_1} && (11)(x1)\ar[d]^-{r_1\cdot id}
\\
x1\ar[d]_-{r_x} && 1(x1)\ar[ll]^-{l_{x1}}\ar[d]^-{id\cdot r_x}
\\
x && 1x\ar[ll]^-{l_x} }
\]
where the top subdiagrams commute because of the unital axiom (acc2), and the bottom ones by the naturality of $r_{1x}$ and $l_{x1}$ respectively applied to the morphisms $l_x$ and $r_x$. To prove the last assertion, notice that $r_{11}=r_1\cdot id_1$ by the naturality of $r_x$ applied to the morphism $r_1$. Taking $x=y=1$ in the left diagram of (\ref{eq00}) and using the normalization axiom and the functoriality of $\cdot$ it follows that $r_1\cdot r_1=r_1\cdot l_1$ and hence $r_1=l_1$.
\end{proof}

From now on, we shall denote by $d$ the isomorphism $l_1=r_1$. 

\begin{proposition}\label{lema1}
In every AC-category $\CC=(\Cc,\cdot,1,b,l,r)$ the following diagrams commute for every objects $x,y,z,t$:
\begin{equation}\label{eq1}
\xymatrix{
(x1)((y1)(zt))\ar[rr]^-{id\cdot(r_y\cdot id)}\ar[d]_-{id\cdot b(y,1,z,t)} && (x1)(y(zt))\ar[rr]^-{b(x,1,y,zt)} && (xy)(1(zt))\ar[d]^-{r^{-1}_{xy}\cdot l_{zt}}
\\
(x1)((yz)(1t))\ar[d]_-{id\cdot(id\cdot l_t)} &&&& ((xy)1)(zt)\ar[d]^-{b(xy,1,z,t)}
\\
(x1)((yz)t)\ar[d]_-{b(x,1,yz,t)} &&&& ((xy)z)(1t)\ar[d]^-{(id\cdot l^{-1}_z)\cdot id}
\\
(x(yz))(1t)\ar[rr]_-{(r^{-1}_x\cdot id)\cdot id} && ((x1)(yz))(1t)\ar[rr]_-{b(x,1,y,z)\cdot id} && ((xy)(1z))(1t)
}
\end{equation} 
\begin{equation}\label{eq21}
\xymatrix{
x((1y)(z1))\ar[rr]^-{r_x^{-1}\cdot(l_y\cdot r_z)}\ar[d]_-{id\cdot b(1,y,z,1)} 
&&
(x1)(yz)\ar[rr]^-{b(x,1,y,z)}
&&
(xy)(1z)\ar[d]^-{id\cdot (r_z^{-1}l_z)} 
\\
x((1z)(y1))\ar[d]_-{r_x^{-1}\cdot(l_z\cdot r_y)} 
&&&& 
(xy)(z1)\ar[d]^-{b(x,y,z,1)}
\\
(x1)(zy)\ar[rr]_-{b(x,1,z,y)}
&&
(xz)(1y)\ar[rr]_-{id\cdot(r_y^{-1}l_y)} 
&&
(xz)(y1) }
\end{equation} 
\begin{equation}\label{eq23}
\xymatrix{
(1(xy))(z1)\ar[rr]^-{l_{xy}\cdot id}\ar[d]_-{b(1,xy,z,1)}
&&
(xy)(z1)\ar[rr]^-{b(x,y,z,1)}
&&
(xz)(y1)\ar[d]^-{(l_x^{-1}\cdot r_z^{-1})\cdot r_y}
\\
(1z)((xy)1)\ar[d]_-{(r_z^{-1}l_z)\cdot r_{xy}}
&&&&
((1x)(z1))y\ar[d]^-{b(1,x,z,1)\cdot id}
\\
(z1)(xy)\ar[rr]_-{b(z,1,x,y)} 
&& 
(zx)(1y)\ar[rr]_-{(l_z^{-1}\cdot r_x^{-1})\cdot l_y} 
&&
((1z)(x1)y
}
\end{equation} 
\begin{equation}\label{eq22}
\xymatrix{
(xy)(zt)\ar[rr]^-{r_{xy}^{-1}\cdot id}\ar[d]_-{b(x,y,z,t)} 
&&
((xy)1)(zt)\ar[rr]^-{b(xy,1,z,t)}
&&
((xy)z)(1t)\ar[d]^-{(id\cdot r_z^{-1})\cdot id)} 
\\
(xz)(yt)\ar[d]_-{r_{xz}^{-1}\cdot id} 
&&&& 
((xy)(z1))(1t)\ar[d]^-{b(x,y,z,1)\cdot id}
\\
((xz)1)(yt)\ar[rr]_-{b(xz,1,y,t)}
&&
((xz)y)(1t)\ar[rr]_-{(id\cdot r_y^{-1})\cdot id} 
&&
((xz)(y1))(1t) }
\end{equation} 
\end{proposition}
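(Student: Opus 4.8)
The plan is to treat all four diagrams uniformly: each is a unital specialization of the $4\times 4$ axiom (acc1), in which several of the eight arguments are set equal to the unit object $1$ and the resulting composite is then cleaned up using the naturality of the associo-commutators against the unitors, the unital axioms (acc2), the normalization axiom (acc3), and Propositions~\ref{cond_simetria} and~\ref{lema0} (in particular the identity $l_1=r_1=d$). The guiding observation is that (acc1) contains, besides the four ``corner'' associo-commutators $b(x,y,z,t)\cdot b(x',y',z',t')$, $b(x,y,x',y')\cdot b(z,t,z',t')$, and so on, the two ``middle'' ones $b(xy,zt,x'y',z't')$ and $b(xz,yt,x'z',y't')$; by forcing some of $x,\dots,t'$ to be $1$ one can arrange these to coincide, after normalization, with the two genuine (non-unitor) arrows of the target diagram, while (acc3) collapses the now-redundant corner associo-commutators to identities.

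First I would carry out (\ref{eq22}) as a template. Its two non-unitor arrows are $b(xy,1,z,t)$ and $b(xz,1,y,t)$ on one side and $b(x,y,z,t)$, $b(x,y,z,1)$ on the other; matching the first pair with the two middle associo-commutators of (acc1) dictates exactly which of the eight arguments must be set to $1$. The unitor maps $r_{xy}^{-1}$, $r_{xz}^{-1}$, $r_z^{-1}$ and $r_y^{-1}$ appearing along the edges of (\ref{eq22}) are precisely the components one is forced to introduce, via naturality of $b$, in order to absorb the spurious unit factors (the $1\cdot 1$, $z\cdot 1$, $t\cdot 1$, etc.) produced by the specialization, thereby turning the specialized middle associo-commutators into honest $b(xy,1,z,t)$ and $b(xz,1,y,t)$. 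Once the substitution is fixed, the identity is obtained by reading (acc1) along its two paths and discarding the identity arrows supplied by (acc3).

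Diagram (\ref{eq1}) is handled the same way but is one step larger: it carries five associo-commutators, all of the restricted form $b(-,1,-,-)$, and is in effect the pentagon/associativity relation for that restricted family, so the appropriate specialization of (acc1) must be combined with one extra naturality rewrite of $b$ in a product argument. For (\ref{eq21}) and (\ref{eq23}) the new feature is the self-inverse twist $r_z^{-1}l_z$ (respectively $r_y^{-1}l_y$) exchanging $z1$ with $1z$; here I would additionally invoke Proposition~\ref{cond_simetria} in the form $b(x,z,y,t)=b(x,y,z,t)^{-1}$ to reconcile the two middle-argument orderings that the specialization produces. I expect (\ref{eq23}) either to follow from (\ref{eq21}) by exploiting the swap $\sigma$ of inner arguments used in the proof of Proposition~\ref{cond_simetria}, or, failing a clean symmetry reduction, to be provable in exact parallel by the same specialization procedure.

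The hard part will not be any single deduction but the bookkeeping: for each diagram one must pin down the exact instance of (acc1) and then insert the naturality squares for $b$ against the correct unitors, in the correct order and with the correct orientation, so that every extraneous unit factor cancels and precisely the four edges of the target survive. The normalization axiom (acc3) is the crux here---it is what guarantees that the corner associo-commutators left over from the specialization are genuinely identities, and without it the composite would not close up. Tracking the inverses of the unitors, and of $b$ itself via Proposition~\ref{cond_simetria}, is where orientation errors are most likely to creep in.
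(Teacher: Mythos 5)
Your plan is essentially the paper's proof: each of (\ref{eq1})--(\ref{eq22}) is established there by padding a single explicit unital instance of the $4\times 4$ axiom (acc1) (for (\ref{eq1}): $y=z=t=y'=1$; for (\ref{eq21}): $y=z=t=x'=t'=1$; for (\ref{eq23}): $x=y=y'=z'=t'=1$; for (\ref{eq22}): $z=t=y'=z'=1$) with naturality squares of $b$ against the unitors, the normalization axiom (acc3) collapsing the degenerate corner associo-commutators, and (acc2)/Proposition~\ref{lema0} handling the unitor compatibilities --- exactly the specialization-and-cleanup procedure you describe, including proving (\ref{eq23}) directly in parallel rather than by a symmetry reduction. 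The only minor divergence is in how the twists $r^{-1}l$ are absorbed: the paper does this in (\ref{eq21}) and (\ref{eq23}) through the unit diagrams of Proposition~\ref{lema0} (i.e., via the components $b(1,1,x,1)$ and $b(1,x,1,1)$ left over from the specialization) rather than through Proposition~\ref{cond_simetria}, though your appeal to $b(x,z,y,t)=b(x,y,z,t)^{-1}$ is in fact also what reconciles the orientation of the arrow $b(xz,1,y,t)$ in (\ref{eq22}) with the specialized diagram, so both tools are legitimately in play and available without circularity.
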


\begin{proof}
The commutativity of (\ref{eq1}) follows from the commutative diagram
{\small
\[
\xymatrix@C=4pc@R=2.7pc{
&
(x1)(y(zt))\ar[r]^-{b(x,1,y,zt)}\ar@{}@<-5ex>[r]|{(N)} 
&
(xy)(1(zt))\ar@{=}[rd]
&
\\
(x1)((y1)(zt))\ar[d]|{id\cdot b(y,1,z,t)}\ar[ru]^{id\cdot(r_y\cdot id)}
&
((x1)(11))((y1)(zt))\ar[r]_-{b(x1,11,y1,zt)}\ar[u]|-{(r_x\cdot d)\cdot(r_y\cdot id)}\ar[dd]|-{id\cdot b(y,1,z,t)} \ar[l]^{(r_x\cdot d)\cdot id} 
&
((x1)(y1))((11)(zt))\ar[dd]|-{b(x,1,y,1)\cdot b(1,1,z,t)}\ar[r]_-{(r_x\cdot r_y)\cdot(d\cdot id)} \ar[u]|-{(r_x\cdot r_y)\cdot(d\cdot id)} \ar@{}@<-6ex>[r]|{(2)} 
&
(xy)(1(zt))\ar[d]|-{r^{-1}_{xy}\cdot l_{zt}}
\\
(x1)((yz)(1t))\ar[d]|{id\cdot(id\cdot l_t)}
&&& 
((xy)1)(zt)\ar@{=}[dd]
\\
(x1)((yz)t)\ar[dd]|-{b(x,1,yz,t)}\ar@{}@<-9ex>[r]|{(N)} 
&
((x1)(11))((yz)(1t))\ar[dd]|-{b(x1,11,yz,1t)} \ar[l]_-{(r_x\cdot d)\cdot(id\cdot l_t)} \ar@{}[r]|{\ \  (1)} \ar[lu]|{(r_x\cdot d)\cdot id}
&
((xy)(11))((1z)(1t))\ar[dd]|{b(xy,11,1z,1t)}\ar[ur]|-{(id\cdot d)\cdot(l_z\cdot l_t)}\ar[dr]|-{(id\cdot d)\cdot(l_z\cdot l_t)}
&
\\ 
&&
\ar@{}@<-2ex>[r]|{(N)} 
& ((xy)1)(zt)\ar[d]|-{b(xy,1,z,t)}
\\
(x(yz))(1t)\ar[rd]_{(r_x^{-1}\cdot id)\cdot id}
&
((x1)(yz))((11)(1t))\ar[r]_-{b(x,1,y,z)\cdot id} \ar[l]^-{(r_x\cdot id)\cdot(d\cdot l_t)}\ar[d]|{id\cdot(d\cdot l_t)}
&
((xy)(1z))((11)(1t))\ar[r]_-{(id\cdot l_z)\cdot(d\cdot l_t)}\ar[d]|{id\cdot(d\cdot l_t)}
&
((xy)z)(1t)\ar[ld]^{(id\cdot l^{-1}_z)\cdot id}
\\
&
((x1)(yz))(1t)\ar[r]_{b(x,1,y,z)\cdot id}
&
((xy)(1z))(1t)
&
}
\] }

\noindent 
where we have already used the normalization axiom (acc3) to write the appropriate identity morphisms instead of $b(x,1,1,1)$ and $b(1,1,1,t)$ in the middle hexagon. In this diagram the subdiagrams labelled (N) are naturality squares, those labelled (2) commute by the unital axioms (acc2), the subdiagram labelled (1) by the $4\times 4$ axiom (acc1) with $y=z=t=y'=1$, $x'=y$, $z'=z$ and $t'=t$, and all the unlabelled subdiagrams by the functoriality of the product. Since the outer diagram coincides with (\ref{eq1}), it follows that (\ref{eq1}) commutes.

The commutativity of (\ref{eq21}) follows from the commutative diagram
{\small
\[
\xymatrix@C=4pc@R=2.7pc{
x((1y)(z1))\ar[r]^{r_x^{-1}\cdot(l_y\cdot r_z)}\ar[d]|{r^{-1}_x\cdot id\ }
& 
(x1)(yz)\ar[r]^-{b(x,1,y,z)}\ar@{}@<-5ex>[r]|{(N)} 
& 
(xy)(1z)\ar[rd]^{(r_x^{-1}\cdot l_y^{-1})\cdot id}
& 
\\
(x1)((1y)(z1))\ar[d]|{id\cdot b(1,y,z,1)}\ar[ru]|{id\cdot(l_y\cdot r_z)}
&
((x1)(11))((1y)(z1))\ar[r]^{b(x1,11,1y,z1)}\ar[u]|-{(r_{x}\cdot d)\cdot(l_y\cdot r_z)}\ar[dd]|-{id\cdot b(1,y,z,1)}\ar[l]^{(r_x\cdot d)\cdot id}
&
((x1)(1y))((11)(z1))\ar[u]|-{(r_x\cdot l_y)\cdot(d\cdot r_z)}\ar[dd]|-{id\cdot b(1,1,z,1)}\ar[r]^-{id\cdot(d\cdot r_z)} \ar@{}@<-5ex>[r]|{(2)}
&
((x1)(1y))(1z)\ar[d]|-{id\cdot(r^{-1}_z l_z)} 
\\
(x1)((1z)(y1))\ar[d]|{id\cdot(l_z\cdot r_y)}
&&&
((x1)(1y))(z1)\ar[d]|{(r_x\cdot l_y)\cdot id}  
\\
(x1)(zy)\ar[d]|-{b(x,1,z,y)}  \ar@{}@<-6ex>[r]|{(N)} 
& 
((x1)(11))((1z)(y1))\ar[l]_-{(r_x\cdot d)\cdot(l_z\cdot r_y)}\ar[d]|-{b(x1,11,1z,y1)} \ar@{}@<5ex>[r]|{(1)} \ar[lu]|{(r_x\cdot d)\cdot id}
& 
((x1)(1y))((1z)(11))\ar[d]|-{b(x1,1y,1z,11)}\ar[r]^-{(r_x\cdot l_y)\cdot(l_z\cdot d)}\ar[ru]|-{id\cdot(l_z\cdot d)} \ar@{}@<-6ex>[r]|{(N)} 
&
(xy)(z1)\ar[d]|-{b(x,y,z,1)}
\\
(xz)(1y)\ar[rd]_{(r^{-1}_x\cdot l^{-1}_z)\cdot id}
& 
((x1)(1z))((11)(y1))\ar[l]_-{(r_x\cdot l_z)\cdot(d\cdot r_y)}\ar[r]^-{id\cdot b(1,1,y,1)}\ar[d]|-{id\cdot(d\cdot r_y)}\ar@{}@<-7ex>[r]|{(2)} 
&  
((x1)(1z))((1y)(11))\ar[r]^-{(r_x\cdot l_z)\cdot(l_y\cdot d)}\ar[d]|-{id\cdot(l_y\cdot d)}
& 
(xz)(y1)
\\
&  
((x1)(1z))(1y)\ar[r]_-{id\cdot(r_y^{-1}l_y)}
&
((x1)(1z))(y1)\ar[ru]_{(r_x\cdot l_z)\cdot id}
&
 }
\] } 

\noindent
where we have already used the normalization axiom (acc3) to write the appropriate identity morphisms instead of $b(x,1,1,y)$, $b(x,1,1,z)$ and $b(x,1,1,1)$ in the middle hexagon. In this diagram the subdiagrams labelled (N) are naturality squares, those labelled (2) commute by Proposition~\ref{lema0}, the subdiagram labelled (1) by the $4\times 4$ axiom (acc1) with $y=z=t=x'=t'=1$ and $y'=y$, $z'=z$, and all unlabelled subdiagrams by the functoriality of the product. Now, again by functoriality of the product, we have that:
\begin{itemize}
\item[(1)] the composite of the three morphisms at the bottom is
\[
[(r_x\cdot l_z)\cdot id]\,[id\cdot(r^{-1}_y\cdot l_y)]\,[(r^{-1}_x\cdot l^{-1}_z)\cdot id]=id\cdot(r_y^{-1}\cdot l_y);
\]
\item[(2)] the composite of the first three morphisms at right is
\[
[(r_x\cdot l_y)\cdot id]\,[id\cdot(r^{-1}_z\cdot l_z)]\,[(r^{-1}_x\cdot l^{-1}_y)\cdot id]=[id\cdot(r_z^{-1}\cdot l_z)];
\]
\item[(3)] the composite of the first three morphisms at left is 
\[
[id\cdot(l_z\cdot r_y)]\,(id\cdot b(1,y,z,1))\,(r_x^{-1}\cdot id)=[r_x^{-1}\cdot(l_z\cdot r_y)]\,(id\cdot b(1,y,z,1)).
\]
\end{itemize}
Therefore, the outer diagram indeed coincides with (\ref{eq21}), and (\ref{eq21}) commutes.

The commutativity of (\ref{eq23}) follows from the commutative diagram
{\small
\[
\xymatrix@C=3.2pc@R=3pc{
&
(1(xy))(z1)\ar[r]^-{b(1,xy,z,1)} \ar@{}@<-5ex>[r]|{(N)} 
&
(1z)((xy)1)
&
\\
(1(xy))((z1)(11))\ar[d]|-{l_{xy}\cdot id} \ar@{}@<-5ex>[r]|{(2)} 
&
((11)(xy))((z1)(11))\ar[l]_-{(d\cdot id)\cdot id}\ar[r]^-{b(11,xy,z1,11)}\ar[d]|-{b(1,1,x,y)\cdot id}
\ar[u]^-{(d\cdot id)\cdot(r_z\cdot d)}
&
((11)(z1))((xy)(11))\ar[d]|-{b(1,1,z,1)\cdot b(x,y,1,1)}\ar[u]_-{(d\cdot r_z)\cdot(id\cdot d)}\ar[r]^-{(d\cdot r_z)\cdot(id\cdot d)}\ar@{}@<-5ex>[r]|{(4)} 
& 
(1z)((xy)1)\ar[d]|-{(r^{-1}_zl_z)\cdot r_{xy}}
\\
(xy)((z1)(11))
&
((1x)(1y))((z1)(11))\ar[ld]|-{(l_x\cdot l_y)\cdot(r_z\cdot d)}\ar[dd]|-{b(1x,1y,z1,11)}\ar[l]_-{(l_x\cdot l_y)\cdot id}\ar@{}@<-9ex>[r]|{(1)} 
& 
((1z)(11))((x1)(y1))\ar[dd]|-{b(1z,11,x1,y1)} \ar[rd]|-{(l_z\cdot d)\cdot(r_x\cdot r_y)}\ar[r]_-{(l_z\cdot d)\cdot(r_x\cdot r_y)}
& (z1)(xy)
\\
(xy)(z1)\ar[d]|-{b(x,y,z,1)} \ar@{}@<-2ex>[r]|{(N)} 
&&&
(z1)(xy)\ar[d]|-{b(z,1,x,y)} \ar@{}@<2ex>[l]|{(N)} 
\\
(xz)(y1)
&
((1x)(z1))((1y)(11))\ar[r]^-{b(1,x,z,1)\cdot b(1,y,1,1)} \ar[l]^-{(l_x\cdot r_z)\cdot(l_y\cdot d)}\ar[d]_-{id\cdot (l_y\cdot d)}\ar@{}@<-5ex>[r]|{(3)} 
&
((1z)(x1))((11)(y1))\ar[r]_-{(l_z\cdot r_x)\cdot(d\cdot r_y)} \ar[d]^-{id\cdot(d\cdot r_y)}
&
(zx)(1y)
\\
&
((1x)(z1))(y1)\ar[r]_-{id\cdot (l^{-1}_yr_y)} 
&
((1x)(z1))(1y)
&
}
\] }

\noindent
where we have already used the normalization axiom (acc3) to write an identity morphism instead of $b(z,1,1,1)$ in the middle hexagon. In this diagram the subdiagrams labelled (N) are naturality squares, those labelled (2), (3), (4) commute by the unital axioms (acc2) and/or Proposition~\ref{lema0}, and the subdiagram labelled (1) commutes by the $4\times 4$ axiom (acc1) with $x=y=y'=z'=t'=1$, $z=x$, $t=y$ and $x'=z$. As before, using the functoriality of the product, it is easy to check that several left and right unitors cancel each other, so that the outer diagram indeed coincides with (\ref{eq23}).

The commutativity of (\ref{eq22}) follows from the commutative diagram
{\small
\[
\xymatrix@C=3.2pc@R=3pc{
&
((xy)1)(zt)\ar[r]^-{b(xy,1,z,t)} \ar@{}@<-5ex>[r]|{(N)} 
&
((xy)z)(1t)
& 
\\
((xy)1)((z1)(1t))\ar[d]|-{r_{xy}\cdot id} \ar@{}@<-5ex>[r]|{(2)} 
&
((xy)(11))((z1)(1t))\ar[l]_-{(id\cdot d)\cdot id}\ar[r]^-{b(xy,11,z1,1t)}\ar[d]|-{b(x,y,1,1)\cdot id}
\ar[u]^-{(id\cdot d)\cdot(r_z\cdot l_t)}
&
((xy)(z1))((11)(1t))\ar[d]|-{b(x,y,z,1)\cdot id}\ar[u]_-{(id\cdot r_z)\cdot(d\cdot l_t)}
& 
\\
(xy)((z1)(1t))
&
((x1)(y1))((z1)(1t))\ar[ld]|-{(r_x\cdot r_y)\cdot(r_z\cdot l_t)}\ar[dd]|-{b(x1,y1,z1,1t)}\ar[l]_-{(r_x\cdot r_y)\cdot id}\ar@{}@<-9ex>[r]|{(1)} 
& 
((xz)(y1))((11)(1t))\ar[dd]|-{b(xz,y1,11,1t)} \ar[rd]|-{(id\cdot r_y)\cdot(d\cdot l_t)}
& 
\\
(xy)(zt)\ar[d]|-{b(x,y,z,t)} \ar@{}@<-2ex>[r]|{(N)} 
&&&
((xz)y)(1t)\ar[d]|-{b(xz,y,1,t)} \ar@{}@<2ex>[l]|{(N)} 
\\
(xz)(yt)
&
((x1)(z1))((y1)(1t)))\ar[r]^-{b(x,1,z,1)\cdot id} \ar[l]^-{(r_x\cdot r_z)\cdot(r_y\cdot l_t)}\ar[d]_-{(r_x\cdot r_z)\cdot id}\ar@{}@<-5ex>[r]|{(2)} 
&
((xz)(11))((y1)(1t))\ar[r]_-{(id\cdot d)\cdot (r_y\cdot l_t)} \ar[d]^-{(id\cdot d)\cdot id}
&
((xz)1)(yt)
\\
&
(xz)((y1)(1t))\ar[r]_-{r^{-1}_{xz}\cdot id} 
&
((xz)1)((y1)(1t))
&
}
\] }

\noindent
where we have already used the normalization axiom (acc3) to write the appropriate identity morphisms instead of $b(y,1,1,t)$, $b(z,1,1,t)$ and $b(1,1,1,t)$ in the middle hexagon. In this diagram, the subdiagrams labeled (N) are naturality squares, those labeled (2) commute by the unital axioms (acc2) and the functoriality of the product, and the subdiagram labeled (1) commutes by the $4\times 4$ axiom (acc1) with $z=t=y'=z'=1$, $x'=z$, and $t'=t$. Once more, using the functoriality of the product, it is easy to check that several left and right unitors cancel each other, so that the outer diagram indeed coincides with (\ref{eq22}).
\end{proof}

\begin{definition}
Given two AC-categories $\CC=(\Cc,\cdot,1,b,l,r)$ and $\CC'=(\Cc',\cdot',1',b',l',r')$, an {\em AC-functor} from $\CC$ to $\CC'$ is a triple $\FF=(F,F_2,F_1)$ consisting of:
\begin{itemize}
\item a functor $F:\Cc\to\Cc'$;
\item natural isomorphisms $F_2(x,y):FxFy\to F(xy)$ for every objects $x,y$ in $\Cc$;
\item an isomorphism $F_1:1'\to F1$
\end{itemize}
(for short, the $\cdot$ between objects is omitted). Moreover, these data must satisfy the following axioms (the product of morphisms is denoted by $\cdot$ in both AC-categories):
\begin{itemize}
\item[{\rm (acf1)}] for every objects $x,y,z,t$ in $\Cc$ the following diagram commutes:
{\small
\[
\xymatrix{
(FxFy)(FzFt)\ar[rrr]^-{b'(Fx,Fy,Fz,Ft)}\ar[d]_-{F_2(x,y)\cdot F_2(z,t)}
&&&
(FxFz)(FyFt)\ar[d]^-{F_2(x,z)\cdot F_2(y,t)} 
\\
F(xy)F(zt)\ar[d]_-{F_2(xy,zt)}
&&&
F(xz)F(yt)\ar[d]^-{F_2(xz,yt)}
\\
F((xy)(zt))\ar[rrr]_-{Fb(x,y,z,t)} &&& F((xz)(yt)) }
\] }
\item[{\rm (acf2)}] for ever object $x$ in $\Cc$ the following diagrams commute:
{\small
\[
\xymatrix{
1'Fx\ar[r]^-{l'_{Fx}}\ar[d]_-{F_1\cdot id} & Fx
\\
F1Fx\ar[r]_-{F_2(1,x)} & F(1x)\ar[u]_-{Fl_x} }
\quad
\xymatrix{
Fx1'\ar[r]^-{r'_{Fx}}\ar[d]_-{id\cdot F_1} & Fx
\\
FxF1\ar[r]_-{F_2(x,1)} & F(x1)\ar[u]_-{Fr_x} }
\] }
\end{itemize}
$\FF$ called a {\em unital AC-functor} when $F_1$ is the identity (in particular, $F1=1'$), and a {\em strict AC-functor} when it is unital and $F_2(x,y)$ is the identity for every objects $x,y$ in $\Cc$ (in particular, $F$ preserves the product on the nose).
\end{definition}

It readily follows that the identity functor $id_\Cc:\Cc\to\Cc$ is a strict AC-functor $\mathbb{id}_\CC=(id_\Cc,1,id_1)$ for every AC-category $\CC$. Observe that the data specifying an AC-functor between AC-categories is exactly the same as the data specifying a symmetric monoidal functor between symmetric monoidal categories. We will use the same notation for both, and the context will make clear whether it refers to an AC-functor or a symmetric monoidal functor. Next result shows that the composition law is also the same for both types of functors.

\begin{proposition}\label{composicio_AC-functors}
Let $\FF:\CC\to\DD$ and $\GG:\DD\to\EE$ be AC-functors, with $\FF=(F,F_2,F_1)$ and $\GG=(G,G_2,G_1)$. Then the composite functor $G\circ F$ is canonically an AC-functor with the isomorphisms $(G\circ F)_2(x,y)$ and $(G\circ F)_1$ given respectively by the composites
\begin{equation}\label{estructura_ac_F'circF_1}
\xymatrix{
GFxGFy\ar[rr]^-{G_2(Fx,Fy)} && G(FxFy)\ar[rr]^-{G(F_2(x,y))} && GF(xy)}
\end{equation}
\begin{equation}\label{estructura_ac_F'circF_2}
\xymatrix{
1_e\ar[r]^-{G_1} & G1_d\ar[r]^-{G(F_1)} & GF1_c}
\end{equation}
Moreover, this composition is strictly associative and has the identity AC-functors $\mathbb{id}_\CC$ as strict units.
\end{proposition}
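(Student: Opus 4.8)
The plan is to check directly that the data (\ref{estructura_ac_F'circF_1}) and (\ref{estructura_ac_F'circF_2}) make $G\circ F$ into an AC-functor, and then to deduce strict associativity and unitality by unwinding these formulas. Since a composite of isomorphisms is again an isomorphism and $(G\circ F)_2(x,y)$ is the vertical composite of the natural isomorphism $G_2(Fx,Fy)$ with the image $G(F_2(x,y))$ under the functor $G$ of the natural isomorphism $F_2(x,y)$, the constraints $(G\circ F)_2$ and $(G\circ F)_1$ are a natural isomorphism and an isomorphism, as required. It therefore remains to verify the coherence axioms (acf1) and (acf2) for the composite.

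Writing $b'$ and $b''$ for the associo-commutators of $\DD$ and $\EE$, I would obtain the instance of (acf1) for $G\circ F$ as the outer boundary of a diagram pasted from three commutative regions. The first region is the instance of (acf1) for $\GG$ at the objects $Fx,Fy,Fz,Ft$ of $\DD$; its horizontal edges are $b''(GFx,GFy,GFz,GFt)$ and $G(b'(Fx,Fy,Fz,Ft))$, and its vertical edges carry the instances of $G_2$. The second region is the image under the functor $G$ of the instance of (acf1) for $\FF$ at $x,y,z,t$; it commutes because $G$ preserves commutative diagrams, its horizontal edges are $G(b'(Fx,Fy,Fz,Ft))$ and $GF(b(x,y,z,t))$, and its vertical edges carry the images under $G$ of the instances of $F_2$. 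These two regions are glued along their common edge $G(b'(Fx,Fy,Fz,Ft))$, and the two remaining interior discrepancies---which arise because the composite constraint $(G\circ F)_2$ applies $G_2$ before $G(F_2)$---are precisely the naturality squares of $G_2$ for the pairs $(F_2(x,y),F_2(z,t))$ and $(F_2(x,z),F_2(y,t))$. Factoring the composite constraints by the functoriality of the product in $\EE$, for instance $(G\circ F)_2(x,y)\cdot(G\circ F)_2(z,t)=[G(F_2(x,y))\cdot G(F_2(z,t))]\,[G_2(Fx,Fy)\cdot G_2(Fz,Ft)]$, one reads off that the outer boundary is exactly (acf1) for $G\circ F$.

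Axiom (acf2) is verified in the same way but is simpler. For each of the two unit triangles one pastes the corresponding triangle (acf2) for $\GG$ (at the object $Fx$), the image under $G$ of the corresponding triangle (acf2) for $\FF$ (at $x$), and a single naturality square of $G_2$, namely the one for the pair $(id_{Fx},F_1)$. After expanding $(G\circ F)_2(x,1)$, $(G\circ F)_1$ and $id_{Fx}\cdot(G\circ F)_1$ by the functoriality of the product, the outer boundary is exactly the corresponding triangle (acf2) for $G\circ F$.

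Strict associativity and unitality are then purely formal. For three composable AC-functors, expanding both bracketings of the product constraint of the triple composite by (\ref{estructura_ac_F'circF_1}) and using the functoriality of the outermost functor together with the associativity of composition of morphisms shows that the two expressions coincide on the nose; the same computation applied to (\ref{estructura_ac_F'circF_2}) settles the unit constraint, and the underlying functors compose associatively. Since $\mathbb{id}_\CC=(id_\Cc,1,id_1)$ has identity product and unit constraints, substituting it into (\ref{estructura_ac_F'circF_1}) and (\ref{estructura_ac_F'circF_2}) on either side returns unchanged the constraints of the remaining factor. The only laborious step is the pasting for (acf1); everything else is bookkeeping. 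In fact, the whole statement is an instance of the standard composability of symmetric monoidal functors, which applies verbatim here because, as already noted, the data and axioms of an AC-functor coincide with those of a symmetric monoidal functor.
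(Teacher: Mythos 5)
Your proposal is correct and follows essentially the same route as the paper: the instance of (acf1) for $G\circ F$ is obtained by pasting (acf1) for $\GG$ at $Fx,Fy,Fz,Ft$, the image under $G$ of (acf1) for $\FF$, and the two naturality squares of $G_2$, which is exactly the decomposition of the paper's displayed diagram (regions (A), (B) and (N)). Your explicit verifications of (acf2) and of strict associativity and unitality simply spell out what the paper delegates to the corresponding standard arguments for monoidal functors, as you yourself note in your closing remark.
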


\begin{proof}
Axiom (acf1) on $(G\circ F)_2$ amounts to the commutativity of the diagram
{\footnotesize
\[
\xymatrix@R=3pc@C=0.5pc{
\mbox{\bf ((GFx)(GFy))((GFz)(GFt))}\ar[rrrr]^-{b_e(GFx,GFy,GFz,GFt)}\ar[d]|-{G_2(Fx,Fy)\cdot G_2(Fz,Ft)}\ar@{}@<-12ex>[rrrr]^{(A)}
&&&&
\mbox{\bf ((GFx)(GFz))((GFy)(GFt))}\ar[d]|-{G_2(Fx,Fz)\cdot G_2(Fy,Ft)} 
\\
G(FxFy)G(FzFt)\ar[d]|-{G(F_2(x,y))\cdot G(F_2(z,t))}\ar[rd]^-{\ \ \ \ \  G_2(FxFy,FzFt)}
&&&&
G(FxFz)G(FyFt)\ar[d]|-{G(F_2(x,z))\cdot G(F_2(y,t))}\ar[ld]_-{G_2(FxFz,FyFt)\ \ \ \ \ }
\\
\mbox{\bf G(F(xy))G(F(zt))}\ar[d]|-{G_2(F(xy),F(zt))} \ar@{}@<-2ex>[r]^{(N)}
& 
G(F(xy)F(zt))\ar[ld]^{\ \ \ \ \ \ \ G(F_2(x,y)\cdot F_2(z,t))}\ar@/_0.2pc/[rr]_-{Gb_d(Fx,Fy,Fz,Ft)} \ar@{}@<-12ex>[rr]^{(B)}
&& 
G((FxFz)(FyFt))\ar[rd]_-{G(F_2(x,z)\cdot F_2(y,t))\ \ \ \ \ \ \  } \ar@{}@<-2ex>[r]^{(N)}
&
\mbox{\bf G(F(xz))G(F(yt))}\ar[d]|-{G_2(F(xz),F(yt))}
\\
G(F(xy)F(zt))\ar[d]|-{G(F_2(xy,zt))} 
&&&&
G(F(xz)F(yt))\ar[d]|-{G(F_2(xz,yt))}
\\
\mbox{\bf GF((xy)(zt))}\ar[rrrr]_-{GF\,b_c(x,y,z,t)} &&&& \mbox{\bf GF((xz)(yt))} 
}
\] }

\noindent
and in this diagram subdiagram (A) commutes because of (acf1) applied to $\GG$, subdiagram (B) because of (acf1) applied to $\FF$ and by functoriality, and the subdiagrams labelled (N) are naturality squares. The proofs of (acf2) and of the last statement are the same as those for monoidal functors between monoidal categories.
\end{proof}

\begin{definition}
Let $\FF,\GG:\CC\to\CC'$ be two parallel AC-functors, with  $\FF=(F,F_2,F_1)$ and $\GG=(G,G_2,G_1)$. An {\em AC-natural transformation} from $\FF$ to $\GG$ is a natural transformation $\tau:F\Rightarrow G$ such that the diagrams
\begin{equation}\label{AC-trans_natural}
\xymatrix{
FxFy\ar[rr]^-{F_2(x,y)}\ar[d]_-{\tau_x\cdot\tau_y} && F(xy)\ar[d]^-{\tau_{xy}}
\\
GxGy\ar[rr]_-{G_2(x,y)} && F(xy) }
\qquad
\xymatrix{
1'\ar[rr]^{F_1}\ar[dr]_-{G_1} && F1\ar[dl]_-{\tau_1}
\\
& G1 & }
\end{equation}
commute for every objects $x,y$ in $\Cc$.
\end{definition}

It readily follows that the identity natural transformation $1_F:F\Rightarrow F$ of the underlying functor of every AC-functor $\FF=(F,F_2,F_1)$ is an AC-natural transformation $1_\mathbb{F}$ for every AC-functor $\FF$.

\begin{proposition}\label{composicions_AC-transformacions}
The usual vertical and horizontal composites of AC-natural transformations are AC-natural transformations.
\end{proposition}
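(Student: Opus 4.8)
The plan rests on a single observation: the two diagrams (\ref{AC-trans_natural}) defining an AC-natural transformation involve only the multiplicativity isomorphisms $F_2,G_2$ and the unit comparisons $F_1,G_1$, and make no reference at all to the associo-commutators. They are therefore word for word the coherence diagrams of a monoidal natural transformation between symmetric monoidal functors. Since Proposition~\ref{composicio_AC-functors} already identifies the composition of AC-functors with that of symmetric monoidal functors, both composites of AC-natural transformations coincide with the corresponding composites of monoidal natural transformations, and the statement reduces to the classical fact for the monoidal case. For completeness I would also record the two direct verifications, which are short.

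For the vertical composite, let $\tau\colon\FF\Rightarrow\GG$ and $\sigma\colon\GG\Rightarrow\HH$ be AC-natural transformations between parallel AC-functors $\CC\to\CC'$, with underlying functors $F,G,H$, and set $(\sigma\tau)_x=\sigma_x\tau_x$. To verify the first diagram of (\ref{AC-trans_natural}) for $\sigma\tau$ I would paste the multiplicativity square of $\tau$ on top of that of $\sigma$: functoriality of the product gives $(\sigma_x\tau_x)\cdot(\sigma_y\tau_y)=(\sigma_x\cdot\sigma_y)(\tau_x\cdot\tau_y)$, so the two squares share the edge $G_2(x,y)$ and paste to the required rectangle for $\sigma\tau$. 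The unit triangle is obtained in the same way, by gluing the unit triangles of $\tau$ and $\sigma$ along the common vertex $G1$.

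For the horizontal composite, let $\tau\colon\FF\Rightarrow\FF'$ with $\FF,\FF'\colon\CC\to\DD$ and $\rho\colon\GG\Rightarrow\GG'$ with $\GG,\GG'\colon\DD\to\EE$, and recall that $(\rho\ast\tau)_x=\rho_{F'x}\circ G(\tau_x)=G'(\tau_x)\circ\rho_{Fx}$, the two forms agreeing by naturality of $\rho$. Using the composite structures
\[
(G\circ F)_2(x,y)=G(F_2(x,y))\circ G_2(Fx,Fy),\qquad (G'\circ F')_2(x,y)=G'(F'_2(x,y))\circ G'_2(F'x,F'y)
\]
from Proposition~\ref{composicio_AC-functors}, the multiplicativity square for $\rho\ast\tau$ is verified by a diagram chase that interleaves the multiplicativity condition of $\rho$ for the pair $(F'x,F'y)$, the $G'$-image of the multiplicativity condition of $\tau$, and several naturality squares of $\rho$ and of $G_2$; the unit condition follows analogously, by pasting the unit triangle of $\rho$ with the $G'$-image of that of $\tau$. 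The main obstacle is purely bookkeeping: one must keep the two equivalent presentations of $(\rho\ast\tau)_x$ straight and insert the $G$- and $G'$-images of the data for $\tau$ on the correct sides of the naturality squares for $\rho$. There is no conceptual difficulty, and the computation is identical to the standard one showing that horizontal composites of monoidal natural transformations are again monoidal.
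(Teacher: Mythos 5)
Your proposal is correct and rests on exactly the same observation as the paper's proof: the diagrams (\ref{AC-trans_natural}) involve only $F_2,G_2,F_1,G_1$ and never the associo-commutators, so AC-natural transformations are literally monoidal natural transformations and the claim reduces to the classical closure of these under vertical and horizontal composition. Your explicit verifications (pasting the multiplicativity squares and unit triangles for the vertical case, and the standard Godement-product chase for the horizontal case) are accurate but go beyond what the paper records, which stops at the reduction.
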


\begin{proof}
AC-natural transformations are defined in exactly the same way as the monoidal natural transformations between monoidal functors, and these are known to be closed under vertical and horizontal compositions.
\end{proof}

It follows that the AC-categories together with the AC-functors and the AC-natural transformations consitute a strict 2-category $\mathbf{ACCat}$ with the above composition laws.

\begin{theorem}\label{teorema_equivalencia}
Let $\mathbf{SMCat}$ be the 2-category of symmetric monoidal categories, symmetric monoidal functors and monoidal natural transformations. Then there is a canonical isomorphism of 2-categories $\mathbf{ACCat}\cong\mathbf{SMCat}$.
\end{theorem}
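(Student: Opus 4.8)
The plan is to realize the asserted isomorphism as the \emph{identity on underlying data}, changing only the packaging of the coherence isomorphisms. Concretely, I would exhibit mutually inverse assignments between AC-structures and symmetric monoidal structures carried by one and the same triple $(\Cc,\cdot,1)$, keeping the unitors $l,r$ fixed; then observe that an AC-functor and a symmetric monoidal functor are literally the same triple $(F,F_2,F_1)$, and an AC-natural transformation and a monoidal one are defined by the very same diagrams. Since the composition laws have already been shown to coincide (Propositions~\ref{composicio_AC-functors} and~\ref{composicions_AC-transformacions}), the object-level bijection will assemble into a strict 2-functor that is bijective on objects and the identity on 1- and 2-cells, hence an isomorphism of 2-categories.

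On objects, in the direction $\mathbf{ACCat}\to\mathbf{SMCat}$ I would send $\CC=(\Cc,\cdot,1,b,l,r)$ to the symmetric monoidal category with the same $\Cc,\cdot,1,l,r$ and with associator and commutator defined, mimicking the monoid computation of the Introduction, by
\[
a_{x,y,z}=(r_x\cdot id)\,b(x,y,1,z)\,(id\cdot l_z^{-1}):(xy)z\to x(yz),
\]
\[
c_{x,y}=(l_y\cdot r_x)\,b(1,x,y,1)\,(l_x^{-1}\cdot r_y^{-1}):xy\to yx.
\]
The symmetry $c_{y,x}\,c_{x,y}=id$ is then immediate from Proposition~\ref{cond_simetria}; the triangle identity is immediate as well, since (acc3) makes $b(x,1,1,y)$ an identity and the triangle collapses by functoriality; the remaining unit coherences follow from (acc2) and Proposition~\ref{lema0}; and the pentagon together with the hexagon(s) are precisely what the auxiliary commutative diagrams (\ref{eq1})--(\ref{eq22}) of Proposition~\ref{lema1}, combined with the $4\times 4$ axiom (acc1), were assembled to deliver.

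In the direction $\mathbf{SMCat}\to\mathbf{ACCat}$ I would keep $\Cc,\cdot,1,l,r$ and set
\[
b(x,y,z,t)=a_{xz,y,t}\,(a^{-1}_{x,z,y}\cdot id)\,((id\cdot c_{y,z})\cdot id)\,(a_{x,y,z}\cdot id)\,a^{-1}_{xy,z,t}.
\]
Here axiom (acc3) holds because $c_{1,1}=id$ (a standard unit coherence) forces $b(x,1,1,y)$ to collapse to an associator composed with its inverse; the unital axioms (acc2) follow from the triangle identity and the unitor coherences; and the $4\times 4$ axiom (acc1) follows by repeated application of the pentagon, the hexagon and naturality. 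One then checks that the two assignments are mutually inverse by substituting each formula into the other and simplifying, using the coherence axioms on the symmetric monoidal side and the AC axioms together with Propositions~\ref{lema0}--\ref{lema1} on the AC side.

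Finally, on 1-cells I would verify that axioms (acf1)--(acf2), read through the dictionary relating $b$ to $(a,c)$ above, become exactly the compatibility of $(F,F_2,F_1)$ with associator, commutator and unitors; on 2-cells the defining diagrams (\ref{AC-trans_natural}) are verbatim those of a monoidal natural transformation, so nothing is to be done there. I expect the genuine obstacle to be the object-level core: deriving (acc1) from the pentagon and hexagon in one direction and, dually, the pentagon and hexagon from (acc1) in the other, together with the two mutual-inverse identities. These are the real diagram chases of the argument, but they are exactly the computations for which the diagrams of Proposition~\ref{lema1} were prepared, so the work is reduced to invoking those lemmas and cancelling unitors by functoriality.
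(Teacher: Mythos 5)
Your proposal is correct and follows essentially the same route as the paper: the same dictionary $b\leftrightarrow(a,c)$ (your associator oriented $(xy)z\to x(yz)$, the inverse of the paper's convention, which is immaterial), the same reliance on Propositions~\ref{cond_simetria}, \ref{lema0} and \ref{lema1} for symmetry, triangle, pentagon, hexagon and the mutual-inverse checks, and the identity action on 1- and 2-cells via Propositions~\ref{composicio_AC-functors} and~\ref{composicions_AC-transformacions}. The only difference is one of economy: where you propose to derive (acc1)--(acc3) and (acf1) by direct chases from the pentagon, hexagon and naturality, the paper discharges exactly these verifications by invoking the coherence theorems for symmetric monoidal categories and functors.
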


\begin{proof}
Every symmetric monoidal category $\CC=(\Cc,\cdot,1,a,c,l,r)$ gives rise to an AC-category $\CC_{ac}$ with the same data $\Cc,\cdot,1,l,r$ as in $\CC$ and associo-commutator $b(x,y,z,t)$ given from the associator and the commutator in $\CC$ by the composite isomorphism in the commutative diagram
\begin{equation}\label{associocommutador}
\xymatrix@C=3pc{
&
x(y(zt))\ar[r]^-{id\cdot a_{y,z,t}} 
&
x((yz)t)\ar[r]^-{id\cdot(c_{y,z}\cdot id)}\ar[dd]|-{a_{x,yz,t}} 
&
x((zy)t)\ar[r]^-{id\cdot a^{-1}_{z,y,t}}\ar[dd]|-{a_{x,zy,t}}
&
x(z(yt))\ar[dr]|-{a_{x,z,yt}}
&
\\
(xy)(zt)\ar[ur]|-{a^{-1}_{x,y,zt}}\ar[dr]|-{a_{xy,z,t}}
&&&&&
(xz)(yt)
\\
&
((xy)z)t\ar[r]_-{a^{-1}_{x,y,z}\cdot id}
& 
(x(yz))t\ar[r]_-{(id\cdot c_{y,z})\cdot id}
&
(x(zy))t\ar[r]_-{a_{x,z,y}\cdot id}
&
((xz)y)t\ar[ur]|-{a^{-1}_{xz,y,t}} 
&
}
\end{equation}
Axioms (acc1)-(acc3) on $b(x,y,z,t)$ follow from the coherence theorem for symmetric monoidal categories ( see \cite[Theorem~XI.1.1]{MacLane-1998} or \cite[Theorem~1.3.8]{Yau-2024-I}). Moreover, every symmetric monoidal functor $\FF=(F,F_2,F_0)$ between symmetric monoidal categories $\CC,\CC'$ is also an AC-functor between the corresponding AC-categories $\CC_{ac},\CC'_{ac}$. Axiom (acf2) holds automatically by definition of symmetric monoidal functor, while axiom (acf1) follows from the coherence theorem for symmetric monoidal functors (see \cite[Theorem~1.3.12]{Yau-2024-I} or \cite[Theorem~1.7]{Gurski-Johnson-2025}). Finally, every monoidal natural transformation between symmetric monoidal functors is automatically an AC-natural transformation between them as AC-functors. Therefore we have a strict 2-functor
\[
(-)_{ac}:\mathbf{SMCat}\to\mathbf{ACCat}
\]
given on objects by $\CC\mapsto\CC_{ac}$ and acting as the identity on morphisms and 2-morphisms.

Conversely, given an AC-category $\DD=(\Dd,\cdot,1,b,l,r)$ a symmetric monoidal category $\DD_{sm}$ is given as follows. The data $\Dd,\cdot,1,l,r$ is the same as in $\DD$, and the associators $a_{x,y,z}$ and commutators $c_{x,y}$ are respectively given by the composite isomorphisms
\begin{equation}\label{associador_b}
\xymatrix@C=3pc{
x(yz)\ar[r]^-{r_x^{-1}\cdot id} & (x1)(yz)\ar[r]^-{b(x,1,y,z)} & (xy)(1z)\ar[r]^-{id\cdot l_z} & (xy)z }
\end{equation}
\begin{equation}\label{commutador_b}
\xymatrix@C=3pc{
xy\ar[r]^-{l_x^{-1}\cdot r_y^{-1}} & (1x)(y1)\ar[r]^-{b(1,x,y,1)} & (1y)(x1)\ar[r]^-{l_y\cdot r_x} & yx }
\end{equation}
We have to see that the associator satisfies the pentagon and triangle axioms, and the commutator the hexagon and symmetry axioms. The pentagon axiom on $a_{x,y,z}$ corresponds to the commutativity of the diagram
{\small
\[
\xymatrix@C=3.5pc{
\mbox{\bf x(y(zt))}\ar[r]^-{r_x^{-1}\cdot id}\ar[d]_-{id\cdot(r_y^{-1}\cdot id)} 
& 
(x1)(y(zt))\ar[rr]^-{b(x,1,y,zt)}
&& 
(xy)(1(zt))\ar[d]^-{id\cdot l_{zt}} 
\\ 
x((y1)(zt))\ar[d]_-{id\cdot b(y,1,z,t)} 
&&& 
\mbox{\bf (xy)(zt)}\ar[d]^-{r^{-1}_{xy}\cdot id}
\\
x((yz)(1t))\ar[d]_-{id\cdot (id\cdot l_t)}
&&& 
((xy)1)(zt)\ar[d]^-{b(xy,1,z,t)}
\\
\mbox{\bf x((yz)t)}\ar[d]_-{r_x^{-1}\cdot id} 
&&&  
((xy)z)(1t)\ar[d]^-{id\cdot l_t} 
\\
(x1)((yz)t)\ar[d]_-{b(x,1,yz,t)} 
&&&   
\mbox{\bf ((xy)z)t}
\\
(x(yz))(1t)\ar[r]_-{id\cdot l_t} 
& 
\mbox{\bf (x(yz))t}\ar[r]_-{(r_x^{-1}\cdot id)\cdot id}  
& 
((x1)(yz))t\ar[r]_-{b(x,1,y,z)\cdot id} 
& 
((xy)(1z))t\ar[u]_-{(id\cdot l_z)\cdot id}
}
\] }

\noindent
or equivalently, the diagram
{\small
\[
\xymatrix@C=3.5pc{
x((y1)(zt))\ar[d]_-{id\cdot b(y,1,z,t)} \ar[r]^-{r^{-1}_x\cdot (r_y\cdot id)}
& 
(x1)(y(zt))\ar[r]^-{b(x,1,y,zt)}
& 
(xy)(1(zt))\ar[d]^-{r^{-1}_{xy}\cdot l_{zt}}
\\
x((yz)(1t))\ar[d]_-{r^{-1}_x\cdot (id\cdot l_t)} 
&& 
((xy)1)(zt)\ar[d]^-{b(xy,1,z,t)}
\\
(x1)((yz)t)\ar[d]_-{b(x,1,yz,t)} 
&&  
((xy)z)(1t)\ar[d]^-{(id\cdot l^{-1}_{z})\cdot l_{t}}
\\
(x(yz))(1t)\ar[r]_-{(r^{-1}_{x}\cdot id)\cdot l_{t}} 
& 
((x1)(yz))t\ar[r]_-{b(x,1,y,z)\cdot id} 
& 
((xy)(1z))t
}
\] }

\noindent
where we have used the functoriality of $\cdot$ to replace some composites involving left and right unitors by a single arrow. Apart from the isomorphisms $r^{-1}_x$ in the upper left and $l_t$ in the lower right, which may be cancelled, this diagram coincides with the commutative diagram (\ref{eq1}) in Proposition~\ref{lema1}. The triangle axiom amounts to the commutativity of the diagram
\[
\xymatrix@C=4pc{
\mbox{\bf x(1y)}\ar[r]^-{r^{-1}_x\cdot id}\ar[d]_-{id\cdot l_y} & (x1)(1y)\ar[r]^-{b(x,1,1,y)} & (x1)(1y)\ar[d]^-{id\cdot l_y}
\\
\mbox{\bf xy} && \mbox{\bf (x1)y}\ar[ll]^-{r_x\cdot id}
}
\]
whose commutativity follows from the functoriality of $\cdot$ and the normalization axiom (acc3). The hexagon axiom on $c_{x,y}$ amounts to the commutativity of the diagram
{\small
\[
\xymatrix@C=3pc{
\mbox{\bf x(yz)}\ar[r]^-{r_x^{-1}\cdot id}\ar[d]_{id\cdot (l_y^{-1}\cdot r_z^{-1})} 
&
(x1)(yz)\ar[r]^-{b(x,1,y,z)}  
& 
(xy)(1z)\ar[r]^-{id\cdot l_z}
&
\mbox{\bf (xy)z}\ar[r]^-{l^{-1}_{xy}\cdot r_z^{-1}}
&
(1(xy))(z1)\ar[d]^-{b(1,xy,z,1)}
\\
x((1y)(z1))\ar[d]_-{id\cdot b(1,y,z,1)}
&&&& 
(1z)((xy)1)\ar[d]^-{l_z\cdot r_{xy}}
\\
x((1z)(y1))\ar[d]_-{id\cdot(l_z\cdot r_y)}
&&&& 
\mbox{\bf z(xy)}\ar[d]^-{r_z^{-1}\cdot id}
\\
\mbox{\bf x(zy)}\ar[d]_-{r_x^{-1}\cdot id}  
&&&& 
(z1)(xy)\ar[d]^-{b(z,1,x,y)}
\\ 
(x1)(zy)\ar[d]_-{b(x,1,z,y)} 
&&&& 
(zx)(1y)\ar[d]_-{id\cdot l_y}
\\
(xz)(1y)\ar[r]_-{id\cdot l_y}
&
\mbox{\bf (xz)y}\ar[r]_-{(l_x^{-1}\cdot r_z^{-1})\cdot id} 
&
((1x)(z1))y\ar[r]_-{b(1,x,z,1)\cdot id} 
&
((1z)(x1))y\ar[r]_-{(l_z\cdot r_x)\cdot id}
&
\mbox{\bf (zx)y}
}
\] }

\noindent
or equivalently, the diagram
{\small
\[
\xymatrix@C=4pc{
x((1y)(z1))\ar[d]_-{id\cdot b(1.y.z.1)}\ar[r]^-{r^{-1}_x\cdot(l_y\cdot r_z)}
&
(x1)(yz)\ar[r]^-{b(x,1,y,z)}  
& 
(xy)(1z)\ar[r]^-{l^{-1}_{xy}\cdot (r^{-1}_zl_{z})}\ar@{.>}[ld]_-{id\cdot(r^{-1}_z\cdot l_z)}
&
(1(xy))(z1)\ar[d]^-{b(1,xy,z,1)}\ar@{.>}[lld]^-{l_{xy}\cdot id}
\\
x((1z)(y1))\ar[d]_-{r^{-1}_{x}\cdot (l_z\cdot r_{y})}\ar@{}@<-5ex>[r]^-{(A)}
&
(xy)(z1)\ar@{.>}[d]^-{b(x,y,z,1)}\ar@{}@<-5ex>[rr]^-{(B)}
&&
(1z)((xy)1)\ar[d]^-{(r^{-1}_{z}l_z)\cdot r_{xy}}
\\
(x1)(zy)\ar[d]_-{b(x,1,z,y)} 
&
(xz)(y1)\ar@{.>}[ld]_-{id\cdot(l^{-1}_y\cdot r_y)}\ar@{.>}[d]^-{(l^{-1}_x\cdot r^{-1}_z)\cdot r_y}
&&
(z1)(xy)\ar[d]^-{b(z,1,x,y)}
\\ 
(xz)(1y)\ar[r]_-{(l^{-1}_x\cdot r^{-1}_z)\cdot l_{y}} 
&
((1x)(z1))y\ar[r]_-{b(1,x,z,1)\cdot id} 
&
((1z)(x1))y\ar[r]_-{(l_z\cdot r_x)\cdot l^{-1}_{y}} 
&
(zx)(1y)
}
\] }

\noindent
where we have used again the functoriality of $\cdot$ to replace a few composites morphisms involving left and right unitors by a single arrow. Now, as indicated by the dotted arrows, this diagram can be subdivided into two triangles together with the subdiagrams (A) and (B). The triangles commute by functoriality, while the diagrams (A) and (B) coincide with the commutative diagrams (\ref{eq21}) and (\ref{eq23}), respectively, in Proposition~\ref{lema1}. Hence the outer diagram also commutes. Finally, the symmetry axiom on $c_{x,y}$ follows readily from (\ref{commutador_b}) and Proposition~\ref{cond_simetria}. This proves that $\DD_{sm}$ indeed is a symmetric monoidal category. Moreover, every AC-functor $\FF=(F,F_2,F_0)$ between AC-categories $\DD,\DD'$ is also a symmetric monoidal functor between the corresponding symmetric monoidal categories $\DD_{sm},\DD'_{sm}$, and the same happens with the natural transformations. Therefore, we also have a (strict) 2-functor 
\[
(-)_{sm}:\mathbf{ACCat}\to\mathbf{SMCat}
\]
given on objects by $\DD\mapsto\DD_{sm}$ and acting as the identity on morphisms and 2-morphisms. 

It remains to check that both composite 2-functors $(-)_{sm}\circ(-)_{ac}$ and $(-)_{ac}\circ(-)_{sm}$ are identities. At the level of morphisms and 2-morphisms it is clearly so. Hence we simply need to check the equalities
\begin{align*}
(\CC_{ac})_{sm}&=\CC,
\\
(\DD_{sm})_{ac}&=\DD
\end{align*}
for every symmetric monoidal category $\CC$, and every AC-category $\DD$. This means checking that the associator and the commutator in $(\CC_{ac})_{sm}$ are the same as in $\CC$, and that the associo-commutator in $(\DD_{sm})_{ac}$ is the same as in $\DD$. It readily follows from (\ref{associocommutador})-(\ref{commutador_b}) that the associator and commutator in $(\CC_{ac})_{sm}$ are respectively given by the long composite isomorphisms shown in the diagrams
{\small
\[
\xymatrix@R=3pc{
x(yz)\ar[r]^-{r_x^{-1}\cdot id}\ar[d]_{a_{x,y,z}} 
& 
(x1)(yz)\ar[r]^{a_{x1,y,z}}\ar@{.>}[d]^{b(x,1,y,z)} 
& 
((x1)y)z\ar[r]^-{a^{-1}_{x,1,y}\cdot id} 
&
(x(1y))z\ar[d]^-{(id\cdot c_{1,y})\cdot id}
\\
(xy)z 
& 
(xy)(1z)\ar[l]^-{id\cdot l_z} 
& 
((xy)1)z\ar[l]^-{a^{-1}_{xy,1,z}}
&
(x(y1))z\ar[l]^-{a_{x,y,1}\cdot id}
}
\] 
}
{\small
\[
\xymatrix@R=3pc{
xy\ar[r]^-{l_x^{-1}\cdot r^{-1}_y}\ar[d]_{c_{x,y}} 
& 
(1x)(y1)\ar[r]^{a_{1x,y,1}}\ar@{.>}[d]^{b(1,x,y,1)} 
& 
((1x)y)1\ar[r]^-{a^{-1}_{1,x,y}\cdot id}
&
(1(xy))1\ar[d]^-{(id\cdot c_{x,y})\cdot id}
\\
yx 
& 
(1y)(x1)\ar[l]^-{l_y\cdot r_x} 
& 
((1y)x)z\ar[l]^-{a^{-1}_{1y,x,1}}
&
(1(yx))1\ar[l]^-{a_{1,y,x}\cdot id}
}
\] }

\noindent
whose commutativity follows again from the coherence theorem for symmetric monoidal categories. As for the associo-commutator of $(\DD_{sm})_{ac}$, it follows from (\ref{associocommutador})-(\ref{commutador_b}) that it is given by the long composite isomorphism shown in the diagram
{\small
\[
\xymatrix@C=3pc@R=3pc{
(xy)(zt)\ar[r]^-{r^{-1}_{xy}\cdot id}\ar[ddd]^{b(x,y,z,t)}
&
((xy)1)(zt)\ar[r]^-{b(xy,1,z,t)} \ar@{}@<-18ex>[r]^{(A)}
&
((xy)z)(1t)\ar[r]^-{(id\cdot l^{-1}_z)\cdot l_t}
&
((xy)(1z))t\ar[r]^-{b(x,y,1,z)\cdot id}\ar@{.>}[d]_-{(id\cdot(r^{-1}_zl_z)\cdot id}
&
((x1)(yz))t\ar[d]|-{(r_x\cdot (l_y^{-1}\cdot r_z^{-1}))\cdot id}
\\
&&&
((xy)(z1))t\ar@{.>}[d]_-{b(x,y,z,1)\cdot id}\ar@{}@<-7ex>[r]^{(B)}
& 
(x((1y)(z1)))t\ar[d]|-{(id\cdot b(1,y,z,1))\cdot id}
\\
&&& 
((xz)(y1))t\ar@{.>}[d]_-{(id\cdot(l^{-1}_yr_y))\cdot id}
&
(x((1z)(y1)))t\ar[d]|-{(r^{-1}_x\cdot (l_z\cdot r_y))\cdot id}
\\
(xz)(y) 
& 
((xz)1)(yt)\ar[l]^-{r_{xz}\cdot id} 
& 
((xz)y)(1t)\ar[l]^-{b(xz,y,1,t)} 
&  
((xz)(1y))t\ar[l]^-{(id\cdot l_y)\cdot l^{-1}_t} 
&
((x1)(zy))t\ar[l]^{b(x,1,z,y)\cdot id}
}
\] }

\noindent
Now, as indicated by the dotted arrows, this diagram can be subdivided into the subdiagrams (A) and (B). After cancelling some left and right unitors using the functoriality of the product, subdiagram (A) coincides with the commutative diagram (\ref{eq22}), and (B) with the product of the commutative diagram (\ref{eq21}) with $id_t$ (here we are using that $b(x,z,y,t)$ is the inverse of $b(x,y,z,t)$; Proposition~\ref{cond_simetria}). Therefore, both subdiagrams commute and the outer diagram is commutative. 
\end{proof}

As immediate consequences of Theorem~\ref{teorema_equivalencia}, we obtain both a coherence and a strictification theorem for AC-categories. 

\begin{corollary}\label{teorema_coherencia} {\rm (Coherence Theorem)}
Every diagram in an AC-category  built formally from associo-commutators and left and right unitors commutes.
\end{corollary}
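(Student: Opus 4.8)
The plan is to deduce this Coherence Theorem directly from the isomorphism of 2-categories $\mathbf{ACCat}\cong\mathbf{SMCat}$ established in Theorem~\ref{teorema_equivalencia}, reducing the statement to the classical coherence theorem for symmetric monoidal categories (MacLane, \cite[Theorem~XI.1.1]{MacLane-1998}). The key observation is that under the 2-functor $(-)_{sm}$, an AC-category $\CC$ is sent to a symmetric monoidal category $\CC_{sm}$ in which the associator $a$, the commutator $c$, and the unitors $l,r$ are expressed as formal composites of the associo-commutators $b$ and the unitors $l,r$ of $\CC$ (via formulas~(\ref{associador_b}) and~(\ref{commutador_b})); and conversely, by~(\ref{associocommutador}), the associo-commutator $b$ of $\CC$ is a formal composite of the associator $a$ and commutator $c$ of $\CC_{sm}$. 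Thus the class of formal diagrams built from $b,l,r$ in $\CC$ coincides, after rewriting, with the class of formal diagrams built from $a,c,l,r$ in $\CC_{sm}$.

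First I would make precise the notion of a diagram \emph{built formally} from the structural isomorphisms: its vertices are iterated products of object-variables and copies of the unit $1$, and its edges are instances of $b(-,-,-,-)$, $l_{-}$, $r_{-}$, their inverses, and whiskerings thereof by the product functor $\cdot$, together with identities. One then checks that each such edge, under the passage $\CC\mapsto\CC_{sm}$, becomes a formal composite of instances of $a,c,l,r$ and their inverses, since each $b(x,y,z,t)$ is by construction~(\ref{associador_b})--(\ref{commutador_b}) such a composite. Hence any formal diagram $D$ in $\CC$ determines a formal diagram $\widehat{D}$ in $\CC_{sm}$ with the same underlying commutativity content: $D$ commutes in $\CC$ if and only if $\widehat{D}$ commutes in $\CC_{sm}$, because $\CC$ and $\CC_{sm}$ share the same underlying category, product, and unit, and the edges of $D$ factor through those of $\widehat{D}$.

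The crux is then to invoke the classical coherence theorem, which guarantees that every formal diagram in a symmetric monoidal category built from $a,c,l,r$ commutes, provided that the source and target of the diagram are connected by a morphism that does not permute object-variables into an ambiguous configuration; in MacLane's formulation, any two formal morphisms with the same source and target that induce the same underlying permutation of the arguments are equal. Since the associo-commutator $b(x,y,z,t)\colon(xy)(zt)\to(xz)(yt)$ induces precisely the transposition swapping the second and third arguments, every formal $b$-$l$-$r$ diagram corresponds to a formal $a$-$c$-$l$-$r$ diagram respecting the arguments' identity, so coherence applies without the usual proviso about distinct parallel symmetry isomorphisms.

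The step I expect to be the main obstacle is the bookkeeping required to show that a formal $b,l,r$-diagram genuinely translates into a \emph{well-formed} formal $a,c,l,r$-diagram to which MacLane's theorem applies, and in particular that no ambiguity of the kind that obstructs symmetric coherence (parallel formal arrows inducing different permutations) can arise. Here Proposition~\ref{cond_simetria} is decisive: it records that $b(x,z,y,t)\,b(x,y,z,t)=\mathrm{id}$, so that the symmetry $c$ obtained from $b$ is a genuine involution, matching the symmetry axiom. This ensures that the permutation induced by any formal $b$-composite is determined by its endpoints, and the classical coherence criterion is met; the remaining verifications are routine naturality and functoriality manipulations of the kind already carried out in Propositions~\ref{lema0}--\ref{lema1}.
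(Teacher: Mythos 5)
Your proposal is correct and follows essentially the same route as the paper: the paper's proof is exactly this one-line reduction, translating any formal $b,l,r$-diagram in $\CC$ into a formal $a,c,l,r$-diagram in $\CC_{sm}$ via Theorem~\ref{teorema_equivalencia} and invoking MacLane's coherence theorem for symmetric monoidal categories. Your extra discussion of permutation-ambiguity is sound (though the uniqueness of the induced bijection follows from the variables in a formal diagram being distinct, rather than from Proposition~\ref{cond_simetria}, whose role is already discharged in establishing the symmetry axiom for $\CC_{sm}$ inside the proof of Theorem~\ref{teorema_equivalencia}).
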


\begin{proof}
Every such diagram in an AC-category $\CC$ will correspond to a diagram in $\CC_{sm}$ formally built from associators, commutators, and left and right unitors, and this diagram commutes by the coherence theorem for symmetric monoidal categories.
\end{proof}

\begin{corollary} {\rm (Strictification Theorem)}
Every AC-category is equivalent to a semistrict AC-category.
\end{corollary}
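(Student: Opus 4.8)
The plan is to transport the classical strictification theorem for symmetric monoidal categories across the isomorphism of 2-categories $(-)_{sm}\colon\mathbf{ACCat}\to\mathbf{SMCat}$ established in Theorem~\ref{teorema_equivalencia}. Since an isomorphism of 2-categories preserves and reflects equivalences of objects, it suffices to combine two ingredients: first, the fact that every symmetric monoidal category is symmetric monoidally equivalent to a symmetric strict monoidal category (Mac Lane's coherence theorem in its symmetric form; see \cite{MacLane-1998}, \cite{Yau-2024-I}); and second, an identification of the AC-categories that arise, via $(-)_{ac}$, from symmetric strict monoidal categories, together with a verification that these are precisely the semistrict ones.

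For the second ingredient I would compute the associo-commutator of $\CC_{ac}$ when $\CC=(\Cc,\cdot,1,a,c,l,r)$ is a symmetric strict monoidal category. In that case all associators and unitors are identities, so the defining composite (\ref{associocommutador}) collapses to $b(x,y,z,t)=id_x\cdot c_{y,z}\cdot id_t$. In particular the unitors $l,r$ of $\CC_{ac}$, being those of $\CC$, are identities, so $\CC_{ac}$ is unital. Moreover the standard unit coherence for the symmetry, namely $c_{1,z}=id_z$ and $c_{y,1}=id_y$, gives $b(x,1,z,t)=id$ and $b(x,y,1,t)=id$; that is, $b$ is the identity whenever a middle argument is the unit object. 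Hence $\CC_{ac}$ is semistrict.

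To assemble the argument, given an arbitrary AC-category $\CC$ I would first pass to the symmetric monoidal category $\CC_{sm}$, choose a symmetric strict monoidal category $\CC'$ together with a symmetric monoidal equivalence $\CC_{sm}\simeq\CC'$, and then apply the 2-functor $(-)_{ac}$. Since $(-)_{ac}\circ(-)_{sm}=id$ by Theorem~\ref{teorema_equivalencia}, this yields an equivalence $\CC=(\CC_{sm})_{ac}\simeq(\CC')_{ac}$ in $\mathbf{ACCat}$, and by the previous paragraph the target $(\CC')_{ac}$ is semistrict. The one point requiring care, which I expect to be the main obstacle, is the unit--symmetry identity $c_{1,z}=id=c_{y,1}$ used above: this is exactly what upgrades the conclusion from \emph{unital} to \emph{semistrict}, and it follows from the coherence theorem for symmetric monoidal categories already invoked in the proof of Theorem~\ref{teorema_equivalencia} (alternatively, from the hexagon and triangle axioms combined with strictness of the unit).
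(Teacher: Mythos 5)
Your proposal is correct and follows essentially the same route as the paper: strictify $\CC_{sm}$ to a symmetric strict monoidal category, apply $(-)_{ac}$ using $(\CC_{sm})_{ac}=\CC$ and the fact that 2-functors preserve equivalences, and observe that the composite (\ref{associocommutador}) collapses to $id_x\cdot c_{y,z}\cdot id_t$, with unitality and the unit--symmetry coherence forcing semistrictness. The only (cosmetic) difference is that you kill $b(x,y,1,t)$ via $c_{y,1}=id_y$ directly, whereas the paper handles that case by invoking the symmetry condition of Proposition~\ref{cond_simetria}; the two are interchangeable.
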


\begin{proof}
Let $\CC$ be an AC-category, and let $\CC^{s}_{sm}$ be any symmetric \emph{strict} monoidal category equivalent to $\CC_{sm}$, i.e., one whose associators and left and right unitors are identities. Since every 2-functor preserves equivalences we have
\[
\CC=(\CC_{sm})_{ac}\simeq(\CC^{s}_{sm})_{ac},
\]
and $(\CC^{s}_{sm})_{ac}$ is a semistrict AC-category. Indeed, it is clearly unital because $\CC^s_{sm}$ is unital. As for the associo-commutator, it is given by (\ref{associocommutador}) with $a,c$ the associator and commutator of $\CC^s_{sm}$. However, by hypothesis, the associator $a$ of $\CC^s_{sm}$ is the identity so that $b(x,1,z,t)$ reduces to the isomorphism $id_x\cdot c_{1,z}\cdot id_t$, and $c_{1,z}=id_z$ because in every symmetric monoidal category we have $c_{1,x}=l_x\,r^{-1}_x$ and $\CC^s_{sm}$ is unital. The fact that the isomorphisms $b(x,y,1,t)$ are also identities follows now from Proposition~\ref{cond_simetria}.
\end{proof}

It readily follows from (\ref{associador_b}) that the above isomorphism of 2-categories $\mathbf{ACCat}\cong\mathbf{SMCat}$ maps semistrict AC-categories to symmetric strict monoidal categories. Consequently, as is done below, brackets in such an AC-category can be omitted when writing the product of more than two objects.

Next result proves that only a few components of the associo-commutator in a semistrict AC-category need to be specified, giving precisely to the commutator in the associated symmetric strict monoidal category.

\begin{proposition}\label{AC-categories_semistrictes}
In every semistrict AC-category $\CC=(\Cc,\cdot,1,b)$ the associo-commutator is completely given by the components $b(1,x,y,1):xy\to yx$ for each $x,y$ in $\Cc$. Moreover, these isomorphisms are such that:
\begin{itemize}
\item[(sac1)] the diagram
\begin{equation}\label{cond_b(1,x,y,1)}
\xymatrix@C=3pc{
xyz\ar[dr]_-{b(1,x,y,1)\cdot id_z\ \ } \ar[rr]^-{b(1,x,yz,1)}
&& 
yzx
\\
&
yxz\ar[ur]_-{\ \ id_y\cdot b(1,x,z,1)}
&
}
\end{equation}
commutes for every objects $x,y,z$;
\item[(sac2)] $b(1,y,x,1)\,b(1,x,y,1)=id_{xy}$ for every objects $x,y$;
\item[(sac3)] $b(1,1,x,1)=id_x$ for every object $x$.
\end{itemize}
Conversely, for any triple $(\Cc,\cdot,1)$ as before, with $\cdot$ strictly associative and $1$ acting as a strict unit, and any family of natural isomorphisms $c_{x,y}:xy\to yx$ satisfying (sac1)-(sac3) with $c_{x,y}$ instead of $b(1,x,y,1)$ for every objects $x,y$, the isomorphisms
\begin{equation}\label{b(1,x,y,1)}
b(1,x,y,1):=c_{x,y}
\end{equation}
give the associo-commutator of a semistrict AC-category $(\Cc,\cdot,1,b)$.
\end{proposition}

\begin{proof}
Since $\CC$ is semistrict, we have
\[
b(x,y,z,t)\overset{(\ref{eq22})}{=}b(x,y,z,1)\cdot id_t\overset{(\ref{eq21})}{=}id_x\cdot b(1,y,z,1)\cdot id_t
\]
for every quadruple $(x,y,z,t)$. This proves the first assertion. Properties (sac2) and (sac3) follow readily from Propositions~\ref{cond_simetria} and \ref{lema0}, respectively, while the commutativity of (\ref{cond_b(1,x,y,1)}) follows from (acc1) with $x=y=t=z'=t'=1$, $z=x$, $x'=y$ and $y'=z$ together with (sac2). Indeed, for these objects axiom (acc1) in a semistrict AC-category reduces to the equality
\[
b(1,x,y,1)\cdot id_z=b(y,z,x,1)\,b(1,x,yz,1),
\]
and the right hand side is equal to $[id_y\cdot b(1,z,x,1)]\,b(1,x,yz,1)$ by (\ref{eq21}). The claim follows now from (sac2). To prove the converse, note that the isomorphisms $c_{x,y}$ are required to satisfy the following axioms:
\begin{itemize}
\item[(ssm1)] the diagram
\begin{equation}\label{cond_c}
\xymatrix@C=3pc{
xyz\ar[dr]_-{c_{x,y}\cdot id_z} \ar[rr]^-{c_{x,yz}}
&& 
yzx
\\
&
yxz\ar[ur]_-{id_y\cdot c_{x,z}}
&
}
\end{equation}
commutes for every objects $x,y$;
\item[(ssm2)] $c_{y,x}\,c_{x,y}=id_{xy}$ for every objects $x,y$;
\item[(ssm3)] $c_{1,x}=id_x$ for every object $x$.
\end{itemize}
Now, such a quadruple $\mathbb C=(\mathcal C,\cdot,1,c)$, together with the trivial associator and trivial left and right unitors, is precisely the data defining a symmetric \emph{strict} monoidal category, and the above quadruple $(\mathcal C,\cdot,1,b)$ with $b$ defined by (\ref{b(1,x,y,1)}) is nothing but the associated (semistrict) AC-category $\mathbb C_{ac}$ by the isomorphism in Theorem~\ref{teorema_equivalencia}.
\end{proof}

\begin{remark}\label{remarca_final}
As pointed out in the Introduction, AC-categories do not provide an alternative description of arbitrary braided monoidal categories (see Proposition~\ref{cond_simetria}). Such a description is instead provided by the unital $b$-categories of Davydov and Runkel. Basically, a unital $b$-category is a category $\mathcal C$ equipped with a product functor $\cdot:\mathcal C\times\mathcal C\to\mathcal C$, a distinguished object $1$, and natural isomorphisms
\begin{align*}
\beta_{x,y,z}&:x(yz)\to y(xz),
\\ 
\rho_x&:x1\to x,
\end{align*}
satisfying appropriate coherence conditions; see \cite{Davydov-Runkel-2015} for details. In Section~2.5 of their work, Davydov and Runkel explain how a braided monoidal category can be obtained from a unital $b$-category. Furthermore, a unital $b$-category can also be constructed from an AC-category. Indeed, if $\mathbb C=(\mathcal C,\cdot,1,b,l,r)$ is an AC-category, and we define $\beta_{x,y,z}$ as the composite morphism
$$
x(yz)\overset{l_x^{-1}\cdot \mathrm{id}}{\longrightarrow} (1x)(yz)\overset{b(1,x,y,z)}{\longrightarrow}(1y)(xz)\overset{l_y\cdot \mathrm{id}}{\longrightarrow} y(xz),
$$
and $\rho_x:x1\to x$ as the right unitor $r_x$ of $\mathbb C$, then $\mathbb C_b=(\mathcal C,\cdot,1,\beta,\rho)$ is a unital $b$-category. The coherence axioms required for $\beta$ and $\rho$ hold true because they reduce to diagrams involving the associo-commutator and the left and right unitors of $\mathbb C$, which commute by Corollary~\ref{teorema_coherencia}. It turns out that the braided monoidal category associated with $\mathbb C_b$ is, in fact, the symmetric monoidal category $\mathbb C_{\mathrm{sm}}$ from Theorem~\ref{teorema_equivalencia}. Indeed, one can readily verify that the braiding and left unitor defined by Davydov and Runkel from $\beta$ and $\rho$ coincide with the canonical commutator given by (\ref{commutador_b}) and the original left unitor of $\mathbb C$, while their associator necessarily equals the canonical associator given by (\ref{associador_b}) by virtue of Corollary~\ref{teorema_coherencia}.
\end{remark}

\section{Cubical cohomology of abelian groups}

\subsection{Eilenberg-MacLane cubical complex}
In 1950, Eilenberg and MacLane \cite{Eilenberg-MacLane-1950-I} showed how to assign functorially to an abelian group $\Asf$ a non-negative chain complex $Q_\bullet(\Asf)$ of abelian groups, called the {\em $Q$-construction} or ({\em normalized}) {\em cubical complex}, whose homology group $H_n(Q_\bullet(\Asf))$ for $n\geq 0$ is isomorphic to $H_{n+k}(K(\Asf,k))$ for each $k\geq n+1$, where $K(\Asf,k)$ denotes the Eilenberg-MacLane space with all homotopy groups trivial except $\pi_k$, which is equal to $\Asf$. The complex $Q_\bullet(\Asf)$ is obtained as a suitable normalization of an auxiliary {\em unnormalized} cubical complex $Q'_\bullet(\Asf)$. 

By definition, $Q'_n(\Asf)$ for each $n\geq 0$ is the free abelian group generated by the $n$-dimensional {\em $A$-cubes}, i.e. $n$-dimensional cubes whose vertices are labelled by elements in $A$. More formally, let $C_n$ be the set of all binary words of length $n$, with $C_0$ consisting of just the empty word $(\,)$. The elements in $C_n$ can indeed be identified with the vertices of a $n$-dimensional cube. Then the free generators of $Q'_n(\Asf)$ are the maps $X:C_n\to A$. We shall write
\[
X(\epsilon_1,\ldots,\epsilon_n)=a_{\epsilon_1\cdots\epsilon_n}\in A
\]
for each $(\epsilon_1,\ldots,\epsilon_n)\in C_n$. For $n=0,1,2,3$ the free generators of $Q'_n(\Asf)$, or $n$-generators for short, are respectively denoted by
\[
(a),\quad (a_0,a_1),\quad \begin{pmatrix} a_{00}&a_{01} \\ a_{10}&a_{11}\end{pmatrix},\quad \left(\begin{array}{cc|cc}a_{000}&a_{001}&a_{100}&a_{101}\\a_{010}&a_{011}&a_{110}&a_{111}\end{array}\right).
\]
Notice that for each $n\geq 1$ an $n$-generator is just a pair of $(n-1)$-generators, representing two opposite faces of the $n$-dimensional $A$-cube. This is made explicit in the 3-generator above, where the $2\times 2$ submatrices on either side of the vertical line represent opposite faces of the three-dimensional $A$-cube. In order to define the differential of $Q'_\bullet(A)$ it is convenient to introduce the maps $0_i,1_i:C_n\to C_{n+1}$ for each $i=1,\ldots,n+1$ given by
\begin{align*}
0_i(\epsilon_1,\ldots,\epsilon_n)&:=(\epsilon_1,\ldots,\epsilon_{i-1},0,\epsilon_i,\ldots,\epsilon_n),
\\
1_i(\epsilon_1,\ldots,\epsilon_n)&:=(\epsilon_1,\ldots,\epsilon_{i-1},1,\epsilon_i,\ldots,\epsilon_n)
\end{align*}
whose images are called the lower and upper $i$-faces of $C_{n+1}$, respectively. They correspond to two opposite $n$-dimensional faces of $C_{n+1}$. Then the differential $\delta_n:Q'_n(A)\to Q'_{n-1}(A)$ for each $n\geq 1$ is the group homomorphism defined on the free generators by
\[
\delta_n X:=\sum_{i=1}^n(-1)^i(S^{(i)}_{n}(X)-U^{(i)}_{n}(X)-L^{(i)}_{n}(X)),
\]
with $U^{(i)}_{n},L^{(i)}_{n}:Q'_n(A)\to Q'_{n-1}(A)$ the restrictions to the $ith$ upper and lower faces, respectively, i.e.
\begin{align*}
U^{(i)}_{n}(X)(\epsilon_1,\ldots,\epsilon_{n-1})&:=X(1_i(\epsilon_1,\ldots,\epsilon_{n-1})), 
\\
L^{(i)}_{n}(X)(\epsilon_1,\ldots,\epsilon_{n-1})&:=X(0_i(\epsilon_1,\ldots,\epsilon_{n-1})),
\end{align*}
and $S^{(i)}_{n}:Q'_n(A)\to Q'_{n-1}(A)$ the vertexwise addition of $U^{(i)}_{n}$ and $L^{(i)}_{n}$. For instance, on the free $n$-generators with $1\leq n\leq 3$ the differential is given by
\begin{equation}\label{delta1}
\delta_1\,(x,y)=(x)+(y)-(x+y)
\end{equation}
\begin{equation}\label{delta2}
\delta_2\,\begin{pmatrix}x&y \\ z&t\end{pmatrix}=(x,y)+(z,t)-(x+z,y+t)-(x,z)-(y,t)+(x+y,z+t)
\end{equation}
\begin{align}
\delta_3\left(\begin{array}{cc|cc}x&y&x'&y'\\z&t&z'&t'\end{array}\right)&=\begin{pmatrix}x&y\\ z&t\end{pmatrix}+\begin{pmatrix}x'&y'\\ z'&t'\end{pmatrix}-\begin{pmatrix}x+x'&y+y'\\ z+z'&t+t'\end{pmatrix} \label{delta3}
\\
&\hspace{1.5truecm} +\begin{pmatrix}x+z&y+t\\ x'+z'&y'+t'\end{pmatrix}-\begin{pmatrix}x&y\\ x'&y'\end{pmatrix}-\begin{pmatrix}z&t\\ z'&t'\end{pmatrix} \nonumber
\\
&\hspace{3truecm}+\begin{pmatrix}x&z\\ x'&z'\end{pmatrix}+\begin{pmatrix}y&t\\ y'&t'\end{pmatrix}-\begin{pmatrix}x+y&z+t\\ x'+y'&z'+t'\end{pmatrix} \nonumber
\end{align}
By definition, the (normalized) cubical complex $Q_\bullet(\Asf)$ is the quotient of $Q'_\bullet(\Asf)$ modulo the subcomplex $N_\bullet(\Asf)$ generated by the so called {\em slabs} and {\em diagonals}. A free $n$-generator $X$ is called a {\em slab} of $Q'_n(A)$, or a {\em $n$-slab}, if $X=(0)$ in case $n=0$, or if his restriction to one of its faces is identically zero (i.e. all the vertices in the face are labelled by $0\in A$) in case $n\geq 1$. More  precisely, for each $n\geq 1$ and each $i=1,\ldots,n$ an $i$-{\em slab} of $Q'_n(A)$ is a free $n$-generator $X$ such that either $X(0_ie)=0$ for each $e\in C_{n-1}$, or $X(1_ie)=0$ for each $e\in C_{n-1}$. For instance, for $1\leq n\leq 3$ the slabs of $Q'_n(\Asf)$ are
\begin{eqnarray*}
(0,y),\,(x,0),\,
\begin{pmatrix}0&0\\x&y\end{pmatrix},\,\begin{pmatrix}x&y\\0&0\end{pmatrix},\,\begin{pmatrix}0&x\\0&y\end{pmatrix},\,\begin{pmatrix}x&0\\y&0\end{pmatrix},\,
\\
\ \ \left(\begin{array}{cc|cc}0&0&x&y\\0&0&z&t\end{array}\right), \left(\begin{array}{cc|cc}x&y&0&0\\z&t&0&0\end{array}\right),\,\left(\begin{array}{cc|cc}0&0&0&0\\x&y&z&t\end{array}\right),\,
\\
\left(\begin{array}{cc|cc}x&y&z&t\\0&0&0&0\end{array}\right),\,
\left(\begin{array}{cc|cc}0&x&0&y\\0&z&0&t\end{array}\right), \left(\begin{array}{cc|cc}x&0&y&0\\z&0&t&0\end{array}\right).\  
\end{eqnarray*}
for each $x,y,z,t\in A$. A free $n$-generator $X$ for $n\geq 2$ is called a {\em $n$-diagonal} if $X(\epsilon_1,\ldots,\epsilon_n)$ is zero for all $(\epsilon_1,\ldots,\epsilon_n)$ with $\epsilon_i\neq \epsilon_{i+1}$ for some $i\in\{1,\ldots,n-1\}$ (in this case, $X$ is called an {\em i-diagonal}). For instance, for $2\leq n\leq 3$ the diagonals are the generators
\[
\begin{pmatrix}x&0\\ 0&y\end{pmatrix},\, \left(\begin{array}{cc|cc}x&y&0&0\\0&0&z&t\end{array}\right),\,\left(\begin{array}{cc|cc}x&0&z&0\\0&y&0&t\end{array}\right)
\]
for each $x,y,z,t\in A$. Then $N_n(\Asf)$ is the subgroup of $Q'_n(\Asf)$ generated by the $n$-slabs and $n$-diagonals. It is easy to check that $N_\bullet(\Asf)$ is indeed a subcomplex of $Q'_\bullet(\Asf)$, and
\[
Q_\bullet(\Asf):=Q'_\bullet(\Asf)/N_\bullet(\Asf).
\]
Notice that when $\Asf=0$ there is only one $n$-dimensional $A$-cube for each $n\geq 0$ so that $Q'_n(0)\cong\ZZ$ for each $n\geq 0$. Moreover, all the maps $U^{(i)}_n,L^{(i)}_n,S^{(i)}_n:\ZZ\to\ZZ$ are identities. Hence the differential $\delta_n:\ZZ\to\ZZ$ is
\[
\delta_n=\left\{\begin{array}{cl}
id_\ZZ, & \mbox{if $n$ is odd} 
\\
0, & \mbox{if $n$ is even.}
\end{array}\right.
\]
In particular, $Q'_\bullet(0)$ has non-trivial homology. However, we have $N_n(0)=Q'_n(0)$ for each $n\geq 0$ so that $Q_\bullet(0)$ is indeed the zero chain complex.

\subsection{Cubical cohomology}

By definition, the cubical cohomology of an abelian group $\Asf$ with coefficients in another abelian group $\Bsf$ is the homology of the cochain complex
\[
C_{cub}^{\bullet}(\Asf,\Bsf):=Hom(Q_{\bullet-1}(\Asf),\Bsf).
\]
i.e. of the complex
\[
0\to Hom(Q_0(\Asf),\Bsf)\overset{\partial^1}{\to} Hom(Q_1(\Asf),\Bsf)\to \cdots\to Hom(Q_{n-1}(\Asf),\Bsf)\overset{\partial^n}{\to} Hom(Q_n(\Asf),\Bsf)\to \cdots
\]
with the homomorphisms $f:Q_n(\Asf)\to\Bsf$ as cubical $(n+1)$-cochains, and $\partial^n=-\circ\delta_n$ for each $n\geq 1$. Its homology will be denoted by $\Hsf_{cub}^\bullet(\Asf,\Bsf)$. At first degree we have
\[
\Hsf^1_{cub}(\Asf,\Bsf)=\mathsf{Hom}(\Asf,\Bsf)
\]
so that $\Hsf^1_{cub}(\Asf,\Bsf)$ coincides with the usual degree-one group cohomology $\Hsf^1(\Asf,\Bsf)$ when $\Bsf$ is viewed as a trivial $\Asf$-module. However, the two cohomologies differ in degree $n\geq 2$. In fact, unlike the usual group cohomology $\Hsf^\bullet(\Asf,\Bsf)$, the cubical one takes explicitly account of the abelian character of $\Asf$ because the condition $\delta\circ\delta=0$ requires both the associativity and the commutativity of the sum in $\Asf$ to be true (in the usual complex only the associativity is required). Moreover, while $\Hsf^2(\Asf,\Bsf)$ classifies the central extensions of $\Bsf$ by $\Asf$, $\Hsf_{cub}^2(\Asf,\Bsf)$ classifies the extensions that are abelian.

We are interested in the third-degree cubical cohomology because, as shown in the next section, it provides an alternative cohomological classification of symmetric 2-groups. It readily follows from (\ref{delta3}) that a cubical 3-cocycle on $\Asf$ with coefficients in $\Bsf$ amounts to a map $\zsf:\Asf^4\to\Bsf$ such that
\begin{align}\label{cond_cocicle}
\zsf(x,y,z,t)+&\zsf(x',y',z',t')-\zsf(x+x',y+y',z+z',t+t') \nonumber
\\
&+\zsf(x,z,x',z')+\zsf(y,t,y',t')-\zsf(x+y,z+t,x'+y',z'+t') \nonumber
\\
&\hspace{1truecm}=\zsf(x,y,x',y')+\zsf(z,t,z',t')-\zsf(x+z,y+t,x'+z',y'+t')
\end{align}
for every $x,y,z,t,x',y',z',t'\in A$, and satisfying the normalization conditions
\begin{equation}\label{normalitzacio_cocicle}
\zsf(x,y,0,0)=\zsf(0,0,x,y)=\zsf(x,0,0,y)=\zsf(0,x,0,y)=\zsf(x,0,y,0)=0
\end{equation}
for every $x,y\in A$. Moreover, it follows from (\ref{delta2}) that such a cubical 3-cocycle is a cubical 3-coboundary if there exists a map $\csf:A^2\to B$, with
\[
\csf(0,x)=\csf(x,0)=0
\]
for each $x\in A$, and such that
\begin{equation}\label{cond_covora}
\zsf(x,y,z,t)=\csf(x,y)+\csf(z,t)-\csf(x+z,y+t)-\csf(x,z)-\csf(y,t)+\csf(x+y,z+t)
\end{equation}
for every $x,y,z,t\in A$. Thus every cubical 3-coboundary is antisymmetric in the middle two arguments. In fact, as we shall see in the next section, every cubical 3-cocycle is antisymmetric in the middle two arguments (see Corollary~\ref{propietat_antisimetria_z}).

\section{A new cohomological classification of symmetric 2-groups} 

Since symmetric 2-groups are a particular type of symmetric monoidal category, it follows from Theorem 2.9 that they can also be regarded as a particular type of AC-category. Let us make this precise, and for convenience we adopt the additive notation $(+,0)$ instead of $(\cdot,1)$.

\begin{definition}
An {\em AC-2-group} is an AC-category $\CC=(\Cc,+,0,b,l,r)$ whose underlying category $\Cc$ is a groupoid and such that every object has a (weak) inverse with respect to $+$ (i.e. an object $-x$ such that $x+(-x),(-x)+x\cong 0$).
\end{definition}

\begin{proposition}\label{equivalencia_AC2grup_2grupsimetric}
Let $\mathbf{2ACGrp}$ be the full sub-2-category of $\mathbf{ACCat}$ with objects the AC-2-groups, and  $\mathbf{2SGrp}$  the full sub-2-category of $\mathbf{SMCat}$ with objects the symmetric 2-groups. Then there is a canonical isomorphism of 2-categories $\mathbf{2SGrp}\cong\mathbf{2ACGrp}$.
\end{proposition}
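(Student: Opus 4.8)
The plan is to leverage Theorem~\ref{teorema_equivalencia} directly rather than to build the isomorphism $\mathbf{2SGrp}\cong\mathbf{2ACGrp}$ from scratch. We already have a canonical isomorphism of 2-categories $(-)_{ac}:\mathbf{SMCat}\to\mathbf{ACCat}$ with inverse $(-)_{sm}$, each acting as the identity on the underlying categories, functors and natural transformations and only re-encoding the coherence data (associator/commutator versus associo-commutator). Since $\mathbf{2SGrp}$ and $\mathbf{2ACGrp}$ are defined as the \emph{full} sub-2-categories of $\mathbf{SMCat}$ and $\mathbf{ACCat}$ cut out by conditions on objects alone, it suffices to check that $(-)_{ac}$ and $(-)_{sm}$ restrict to mutually inverse 2-functors between these sub-2-categories. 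Because both 2-functors are already the identity on morphisms and 2-morphisms, the only thing to verify is that they match up the classes of objects.

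Concretely, first I would recall that a symmetric monoidal category $\CC$ lies in $\mathbf{2SGrp}$ precisely when its underlying category $\Cc$ is a groupoid and every object admits a weak $\otimes$-inverse, and that an AC-category lies in $\mathbf{2ACGrp}$ under the verbatim same two conditions (groupoid underlying category; every object weakly invertible for the product). The key observation is that both conditions are phrased entirely in terms of the data $\Cc,\cdot,1$, which $(-)_{ac}$ and $(-)_{sm}$ leave untouched: a weak inverse is an object $-x$ together with isomorphisms $x(-x)\cong 1\cong(-x)x$, and whether such isomorphisms exist depends only on the underlying category and product functor, not on the particular associator, commutator, or associo-commutator chosen. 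Hence if $\CC$ is a symmetric 2-group then $\CC_{ac}$ has the same groupoid $\Cc$, the same product, and the same invertible objects, so $\CC_{ac}$ is an AC-2-group; and symmetrically $\DD_{sm}$ is a symmetric 2-group whenever $\DD$ is an AC-2-group.

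I would then conclude by restriction. The two object-assignments $\CC\mapsto\CC_{ac}$ and $\DD\mapsto\DD_{sm}$ send $\mathbf{2SGrp}$ into $\mathbf{2ACGrp}$ and back, the identities $(\CC_{ac})_{sm}=\CC$ and $(\DD_{sm})_{ac}=\DD$ from Theorem~\ref{teorema_equivalencia} continue to hold (they are the global identities, restricted), and on morphisms and 2-morphisms fullness ensures that every morphism of symmetric 2-groups is already an arbitrary symmetric monoidal functor, hence mapped identically. Thus the restriction of $(-)_{ac}$ is a strict 2-functor $\mathbf{2SGrp}\to\mathbf{2ACGrp}$ with strict 2-functor inverse the restriction of $(-)_{sm}$, giving the desired canonical isomorphism.

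I do not expect any genuine obstacle: the entire content is the remark that invertibility of an object is a property of the monoidal structure that survives the re-encoding of coherence data. The only point requiring a line of care is confirming that the defining conditions for $\mathbf{2ACGrp}$ and $\mathbf{2SGrp}$ are literally the same predicate on the shared data $(\Cc,\cdot,1)$, so that the object classes correspond under an isomorphism that fixes that data; once this is spelled out, the statement follows formally from Theorem~\ref{teorema_equivalencia} by passing to full sub-2-categories.
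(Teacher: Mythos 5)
Your proof is correct and follows essentially the same route as the paper: both arguments simply restrict the isomorphism of Theorem~\ref{teorema_equivalencia} to the full sub-2-categories, noting that $(-)_{ac}$ and $(-)_{sm}$ preserve the defining object conditions (groupoid underlying category and weak invertibility). You spell out the reason for this preservation --- that the conditions depend only on the shared data $(\Cc,\cdot,1)$ --- in more detail than the paper does, but this is the same argument.
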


\begin{proof}
When $\CC$ is a symmetric 2-group the corresponding AC-category $\CC_{ac}$ is an AC-2-group, and when $\DD$ is an AC-2-group the corresponding symmetric monoidal category $\DD_{sm}$ is a symmetric 2-group. Hence the isomorphism $\mathbf{SMCat}\cong\mathbf{ACCat}$ of Theorem~\ref{teorema_equivalencia} indeed restricts to an isomorphism between $\mathbf{2SGrp}$ and $\mathbf{2ACGrp}$.
\end{proof}

Thus, AC-2-groups are equivalent to symmetric 2-groups. Consequently, a new cohomological classification of symmetric 2-groups can be obtained by classifying AC-2-groups up to equivalence. As we now show, the relevant cohomology is Eilenberg–MacLane cubical cohomology for abelian groups.

\subsection{}
To classify AC-2-groups up to equivalence, we adopt an approach analogous to that of Sinh and of Joyal–Street in their respective classifications of symmetric and braided 2-groups. We begin by showing that a distinguished class of skeletal AC-2-groups, which we call special AC-2-groups, can be constructed from any triple $(\Gsf,\Asf,\zsf)$ consisting of two abelian groups $\Gsf,\Asf$, and a cubical 3-cocycle $\zsf$ on $\Gsf$ with values in $\Asf$ (\S~\ref{AC2-grup_especial}). Furthermore, cohomologous cubical 3-cocycles yield equivalent AC-2-groups (Proposition~\ref{equivalents_quan_z_sim_z'}). We then show that every AC-2-group is equivalent to a special AC-2-group by explicitly determining the corresponding triple $(\Gsf,\Asf,\zsf)$ (Proposition~\ref{terna}).

\subsection{}\label{AC2-grup_especial}

Let $(\Gsf,\Asf,\zsf)$ be any triple consisting of two abelian groups $\Gsf,\Asf$ together with a cubical 3-cocycle of $\Gsf$ with values in $\Asf$. Then an AC-2-group is given as follows. The underlying groupoid has the elements of $G$ as objects, the elements of $A\times G$ as morphisms, with $(a,x):x\to x$, and the morphisms of the form $(0,x)$ as identity morphisms. The composition law is given by
\begin{equation}\label{composicio}
(a,x)\circ(b,x)=(a+b,x),
\end{equation}
while the sum is given on objects by the product in $G$, and on morphisms by
\begin{equation}\label{suma}
(a,x)+(b,y)=(a+b,x+y).
\end{equation}
The identity $0\in G$ is the zero object, and the left and right unitors are both identities. Finally, the associo-commutator is given by
\begin{equation}\label{associo-commutador_especial}
b(x,y,z,t)=(\zsf(x,y,z,t),x+y+z+t)
\end{equation}
for every $x,y,z,t\in G$. Axioms (acc1)-(acc3) on $b$ follow readily from (\ref{composicio}), (\ref{suma}) and conditions (\ref{cond_cocicle}) and (\ref{normalitzacio_cocicle}) on $\zsf$. For instance, an easy computation using (\ref{composicio}) and (\ref{suma}) shows that axiom (acc1) amounts to the equality 
\begin{align}
\zsf(x+y,z+t,x'+y',z'+t')&+\zsf(x,y,x',y')+\zsf(z,t,z',t')+\zsf(x+x',y+y',z+z',t+t') \nonumber
\\
&=\zsf(x,y,z,t)+\zsf(x',y',z',t')+\zsf(x+z,y+t,x'+z',y'+t')\nonumber
\\
&\hspace{2truecm}+\zsf(x,z,x',z')+\zsf(y,t,y',t')
\end{align}
which, after rearranging terms, reduces to (\ref{cond_cocicle}). Axioms (acc2)-(acc3) follow similarly from (\ref{normalitzacio_cocicle}). Hence the structure so defined is indeed a (skeletal) AC-2-group. It will be denoted by $\AAA(\Gsf,\Asf,\zsf)$. 

\begin{definition}
An AC-2-group $\AAA$ is called a {\em special AC-2-group} if $\AAA=\AAA(\Gsf,\Asf,\zsf)$ for some triple $(\Gsf,\Asf,\zsf)$ as before.
\end{definition}

Notice that the relation between the associo-commutator $b$ in a special AC-2-group and the corresponding cubical 3-cocycle $\zsf$, as given by (\ref{associo-commutador_especial}), implies that every property of $b$ can be translated into an equivalent property of $\zsf$. In particular, we have the following.

\begin{corollary}\label{propietat_antisimetria_z}
Let $\Gsf,\Asf$ be any abelian groups. Then every (normalized) cubical 3-cocycle $\zsf:\Gsf^4\to\Asf$ is antisymmetric in the middle two arguments.
\end{corollary}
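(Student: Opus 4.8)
The plan is to read off the antisymmetry directly from the symmetry condition of Proposition~\ref{cond_simetria}, applied to the special AC-2-group attached to $\zsf$. The key point is that, by the construction in \S\ref{AC2-grup_especial}, the triple $(\Gsf,\Asf,\zsf)$ already gives rise to a genuine AC-2-group $\AAA(\Gsf,\Asf,\zsf)$; hence any identity satisfied by the associo-commutators in an arbitrary AC-category must hold, once translated through (\ref{associo-commutador_especial}), as an identity on $\zsf$.

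First I would form $\AAA(\Gsf,\Asf,\zsf)$, whose associo-commutator is the endomorphism
\[
b(x,y,z,t)=(\zsf(x,y,z,t),\,x+y+z+t)
\]
of the object $x+y+z+t$; since $\Gsf$ is abelian, both the source $(xy)(zt)$ and the target $(xz)(yt)$ of $b(x,y,z,t)$ collapse to this single object. I would then invoke Proposition~\ref{cond_simetria}, which gives $b(x,z,y,t)\,b(x,y,z,t)=id_{(xy)(zt)}$, the identity endomorphism of $x+y+z+t$.

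Finally I would unwind this equality by means of the composition law (\ref{composicio}), according to which $(a,w)\circ(b,w)=(a+b,w)$. The composite $b(x,z,y,t)\,b(x,y,z,t)$ is thus the endomorphism $(\zsf(x,z,y,t)+\zsf(x,y,z,t),\,x+y+z+t)$, whereas the identity on $x+y+z+t$ is $(0,x+y+z+t)$. Comparing first components yields
\[
\zsf(x,z,y,t)+\zsf(x,y,z,t)=0,
\]
which is precisely the asserted antisymmetry in the middle two arguments.

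There is essentially no obstacle here beyond bookkeeping: the substance is already contained in Proposition~\ref{cond_simetria}, and the corollary is merely its transcription into cocycle language via the dictionary (\ref{associo-commutador_especial}). The only step deserving a moment's attention is checking that the source and target objects of $b(x,y,z,t)$ genuinely coincide in the skeletal model, so that $b(x,y,z,t)$ and $b(x,z,y,t)$ are composable endomorphisms of one and the same object and the composition law (\ref{composicio}) applies verbatim.
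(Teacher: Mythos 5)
Your proof is correct and follows exactly the paper's own argument: both form the special AC-2-group $\AAA(\Gsf,\Asf,\zsf)$, apply the symmetry condition of Proposition~\ref{cond_simetria} to its associo-commutator, and translate through (\ref{associo-commutador_especial}) and the composition law (\ref{composicio}) to obtain $\zsf(x,z,y,t)+\zsf(x,y,z,t)=0$. Your extra remark that source and target collapse to the single object $x+y+z+t$ in the skeletal model is a sound (if implicit in the paper) bookkeeping check.
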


\begin{proof}
Let us consider the special AC-2-group $\AAA(\Gsf,\Asf,\zsf)$. We know from Proposition~\ref{cond_simetria} that its associo-commutator is such that the composite isomorphism $b(x,y,z,t)\,b(x,z,y,t)$ is the identity for every objects $x,y,z,t$, and in terms of the cubical 3-cocycle $\zsf$ this means that $\zsf(x,y,z,t)+\zsf(x,z,y,t)=0$. 
\end{proof}

\begin{proposition}\label{equivalents_quan_z_sim_z'}
Let $(\Gsf,\Asf,\zsf)$ be as before, and let $\csf$ be any cubical 2-cochain of $\Gsf$ with values in $\Asf$. Then $\AAA(\Gsf,\Asf,\zsf+\partial\csf)\simeq\AAA(\Gsf,\Asf,\zsf)$.
\end{proposition}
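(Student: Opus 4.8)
The plan is to exhibit an explicit AC-equivalence between the two special AC-2-groups $\CC:=\AAA(\Gsf,\Asf,\zsf)$ and $\CC':=\AAA(\Gsf,\Asf,\zsf+\partial\csf)$ which is the identity on the underlying groupoid but carries a nontrivial structure isomorphism built from $\csf$. Since $\CC$ and $\CC'$ share the same underlying groupoid, the same product (\ref{suma}), the same zero object, and trivial unitors, I would set $F:=id_\Cc$ and $F_1:=id$, and define the isomorphisms $F_2(x,y):FxFy\to F(xy)$ to be the automorphism $(\csf(x,y),x+y)$ of the object $x+y$. Thus $\FF=(id_\Cc,F_2,id)$ is a candidate AC-functor $\CC\to\CC'$, with all the nontriviality concentrated in the $\Asf$-valued cubical $2$-cochain $\csf$.

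I would then verify the two AC-functor axioms. Axiom (acf2) reduces immediately to the normalization $\csf(0,x)=\csf(x,0)=0$ of a cubical $2$-cochain: since the unitors and $F_1$ are identities, the two unital squares assert only that $F_2(0,x)=F_2(x,0)=id$. Naturality of $F_2$ is automatic, because the only morphisms of the groupoid are automorphisms and both composition (\ref{composicio}) and the product functor add their $\Asf$-components, so every naturality square commutes by the commutativity of $\Asf$. The crucial point is axiom (acf1). All six objects of its diagram coincide with the single element $x+y+z+t\in\Gsf$ (as $\Gsf$ is abelian), so every morphism involved is an automorphism of this object; since composition and $\cdot$ both add $\Asf$-components, the diagram collapses to a single additive identity in $\Asf$. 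Substituting $F_2(x,y)=\csf(x,y)$, the associo-commutator $b(x,y,z,t)=\zsf(x,y,z,t)$ of $\CC$ from (\ref{associo-commutador_especial}), and the associo-commutator $b'(x,y,z,t)=\zsf(x,y,z,t)+(\partial\csf)(x,y,z,t)$ of $\CC'$, this identity becomes
\[
(\partial\csf)(x,y,z,t)=\csf(x,y)+\csf(z,t)-\csf(x+z,y+t)-\csf(x,z)-\csf(y,t)+\csf(x+y,z+t),
\]
which is exactly the coboundary formula (\ref{cond_covora}). Hence (acf1) holds precisely because $\partial\csf$ is a coboundary, and $\FF$ is a genuine AC-functor.

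Finally I would argue that $\FF$ is an equivalence. As $F=id_\Cc$ is invertible and $F_2,F_1$ are isomorphisms, $\FF$ is an AC-equivalence; in fact one can do better and construct the analogous AC-functor $\GG=(id_\Cc,G_2,id):\CC'\to\CC$ with $G_2(x,y)=(-\csf(x,y),x+y)$ (well defined by the same computation, since $\partial(-\csf)=-\partial\csf$). Using the composition law of Proposition~\ref{composicio_AC-functors}, the structure isomorphisms of $\GG\FF$ and $\FF\GG$ have $\Asf$-component $\csf(x,y)-\csf(x,y)=0$, so both composites are strictly the identity AC-functor; thus $\FF$ is even an isomorphism of AC-2-groups, which in particular yields the asserted equivalence. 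I expect no genuine obstacle here beyond careful bookkeeping: the only delicate step is matching the signs and the order of the six $\csf$-terms produced by (acf1) against the coboundary formula (\ref{cond_covora}), and checking that the passage from the diagram (acf1) to an identity in $\Asf$ correctly uses that the product of morphisms adds $\Asf$-components.
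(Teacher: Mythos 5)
Your proposal is correct and follows essentially the same route as the paper: the identity underlying functor with $F_1=id$ and $F_2(x,y)=(\csf(x,y),x+y)$, with axiom (acf1) reducing via the additive morphism structure to the coboundary formula (\ref{cond_covora}) and (acf2) to the normalization $\csf(0,x)=\csf(x,0)=0$. Your extra details — the explicit naturality check and the strict inverse $\GG$ with $G_2(x,y)=(-\csf(x,y),x+y)$, upgrading the equivalence to an isomorphism — are sound refinements of the paper's brief ``clearly an equivalence,'' not a different method.
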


\begin{proof}
Let $\FF:\AAA(\Gsf,\Asf,\zsf)\to\AAA(\Gsf,\Asf,\zsf+\partial\csf)$ be the AC-functor given as follows. The underlying functor is the identity, the natural isomorphisms $F_2(x,y):x+y\to x+y$ are given by
\[
F_2(x,y):=(\csf(x,y),x+y)
\]
and $F_1$ is the identity. Axiom (acf1) readily follows from the fact that the cubical 3-cocycle of the target AC-2-group is $\zsf'=\zsf+\partial\csf$, and axiom (acf2) follows from the normalization conditions on $\csf$.  Therefore $\FF$ is indeed an AC-functor, and it is clearly an equivalence of AC-2-groups.
\end{proof}

\begin{proposition}\label{terna}
Every AC-2-group is equivalent to a special AC-2-group $\AAA(\Gsf,\Asf,\zsf)$ for some triple $(\Gsf,\Asf,\zsf)$ as before.
\end{proposition}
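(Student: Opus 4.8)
The plan is to read off the classifying data $(\Gsf,\Asf,\zsf)$ from the homotopy invariants of the given AC-2-group $\CC=(\Cc,+,0,b,l,r)$ and then to build an explicit equivalence onto the corresponding special AC-2-group.

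First I would set $\Gsf=\pi_0(\Cc)$, the set of isomorphism classes of objects, and $\Asf=\mathrm{Aut}_\Cc(0)$. The sum functor makes $\Gsf$ a commutative monoid, and since every object is invertible $\Gsf$ is an abelian group; $\Asf$ is abelian by the Eckmann--Hilton argument, since on $\mathrm{Aut}(0)$ the composition and the restriction of $+$ coincide and are commutative. I would also note that the conjugation action of $\Gsf$ on $\Asf$ is trivial, which for a symmetric (indeed braided) $2$-group follows from Proposition~\ref{cond_simetria} together with the naturality of the associo-commutator. Invoking the Strictification Theorem proved above, I may assume that $\CC$ is semistrict; thus the unitors are identities, $+$ is strictly associative and unital on both objects and morphisms, and $c_{0,g}=b(0,0,g,0)=\mathrm{id}$ for every $g$.

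Next I would pass to a skeletal model by choosing one representative object $X_g$ for each class $g\in\Gsf$, with $X_0=0$, together with comparison isomorphisms $u_{g,h}\colon X_g+X_h\to X_{g+h}$. Using the invertibility of $g$, I would trivialize each automorphism group by the translation isomorphism $\theta_g\colon\Asf\xrightarrow{\ \cong\ }\mathrm{Aut}(X_g)$, $a\mapsto a+\mathrm{id}_{X_g}$; a short interchange computation, which uses $c_{0,g}=\mathrm{id}$ so that left and right translation by $g$ agree, shows that under these identifications the composition takes the additive form $(a,g)\circ(b,g)=(a+b,g)$ of (\ref{composicio}). The sum of morphisms is additive only up to a normalized $2$-cochain $\nu(g,h)\in\Asf$ recording the failure of the $u_{g,h}$ to be coherent, so a priori it reads $(a,g)+(b,h)=(a+b+\nu(g,h),g+h)$. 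Here the triviality of the $\Gsf$-action guarantees that $\nu$ is the only residual twist, and a further equivalence of AC-2-groups given by an AC-functor with $F_2(g,h)=-\nu(g,h)$, exactly as in the proof of Proposition~\ref{equivalents_quan_z_sim_z'}, removes it and brings the sum into the strictly additive form (\ref{suma}), at the cost of altering $b$ by the coboundary $\partial\nu$.

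In the resulting model all hom-sets are $\Asf\times\Gsf$ with additive composition and sum and trivial unitors, and since $(x+y)+(z+t)=(x+z)+(y+t)=x+y+z+t$ the component $b(x,y,z,t)$ is an automorphism of $x+y+z+t$, hence an element $\zsf(x,y,z,t)\in\Asf$ as in (\ref{associo-commutador_especial}). Finally I would check that $\zsf$ is a normalized cubical $3$-cocycle: the $4\times 4$ axiom (acc1) translates verbatim into the cocycle identity (\ref{cond_cocicle}), while the unital and normalization axioms (acc2)--(acc3) yield the normalization conditions (\ref{normalitzacio_cocicle}), with the antisymmetry in the two middle arguments provided by Proposition~\ref{cond_simetria}. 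This identifies the model with $\AAA(\Gsf,\Asf,\zsf)$ and hence $\CC\simeq\AAA(\Gsf,\Asf,\zsf)$. I expect the main obstacle to be the bookkeeping of the third step: producing the skeletal, trivialized model in which both the composition and the sum of morphisms are strictly additive and the unitors are trivial, so that all remaining nontriviality is concentrated in $\zsf$. This is precisely where the triviality of the action, the Eckmann--Hilton argument, and the coboundary adjustment of Proposition~\ref{equivalents_quan_z_sim_z'} must be combined with care.
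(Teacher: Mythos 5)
Your proposal is correct, but it takes a genuinely different route from the paper. The paper never strictifies: it works with the arbitrary AC-2-group $\AAA$ directly, builds the skeletal groupoid $\Aa_{sk}$ with hom-sets $\pi_1(\AAA)\times\pi_0(\AAA)$, exhibits an explicit equivalence $H$ with pseudoinverse $H'$ --- using the trivializations $\delta_{x_g}(u)=r_{x_g}\,(\mathrm{id}_{x_g}+u)\,r^{-1}_{x_g}$, which carry the (possibly nontrivial) unitors --- transports the AC-structure along $H'$, and defines $\zsf$ by the explicit diagram (\ref{3-cocicle_z}), in which the chosen isomorphisms $\iota$ conjugate $b(x_{g_1},x_{g_2},x_{g_3},x_{g_4})$ into an automorphism of $x_{g_1+g_2+g_3+g_4}$. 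You instead reduce first to a semistrict model via the Strictification Theorem and only then skeletalize; for this you should add the one-line remark that the semistrict replacement of an AC-2-group is again an AC-2-group (being a groupoid and invertibility of objects are invariant under equivalence). Your route buys a transparent extraction of the cocycle --- in the strict skeletal model $b(x,y,z,t)$ is literally an automorphism of $x+y+z+t$, and (acc1)--(acc3) read off verbatim as (\ref{cond_cocicle})--(\ref{normalitzacio_cocicle}) --- at the price of resting on the strictification corollary, hence on Theorem~\ref{teorema_equivalencia} and MacLane strictification; the paper's route is self-contained and choice-explicit, which is what makes the dependence of $\zsf$ on the representatives $x_g$ and isomorphisms $\iota_x$, and hence the cohomologous ambiguity exploited via Proposition~\ref{equivalents_quan_z_sim_z'}, directly visible. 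One simplification available to you: your twist $\nu$ is automatically zero. Conjugation by any isomorphism $u\colon y\to y'$ carries $a+\mathrm{id}_y$ to $a+\mathrm{id}_{y'}$, by interchange: $(a+\mathrm{id}_{y'})\,(\mathrm{id}_0+u)=a+u=(\mathrm{id}_0+u)\,(a+\mathrm{id}_y)$; hence the transported sum of morphisms, $(a,g)+(b,h)=\theta_{g+h}^{-1}\bigl(u_{g,h}\,(\theta_g(a)+\theta_h(b))\,u_{g,h}^{-1}\bigr)$, is strictly additive for \emph{any} choice of the comparison isomorphisms $u_{g,h}$, so your coboundary adjustment with $F_2=-\nu$ is a harmless but superfluous detour (and in any case sound, since it would only alter $\zsf$ by $\partial\nu$). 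Finally, choosing $u_{0,h}$ and $u_{g,0}$ to be identities makes the transported unitors trivial, which settles the remaining bookkeeping you flag at the end.
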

 
\begin{proof}
Let $\AAA=(\Aa,\cdot,+,0,l,r)$ be any AC-2-group. The corresponding abelian groups $\Gsf,\Asf$ are respectively given by the the group $\pi_0(\AAA)$ of isomorphism classes of objects in $\Aa$ with the composition law induced by $+$, and the group $\pi_1(\AAA)$ of automorphisms of the unit object $0$. Both are abelian, the first because $x+y\cong y+x$ for every objects $x,y$ in $\Aa$, and the second by the Eckmann-Hilton argument applied to the composition of automorphisms and the commutative operation $\hat +$ defined by $a\hat + b=d\,(a+b)\,d^{-1}$ for each $a,b\in A$, which proves that $\circ=\hat +$ (as before, $d$ denotes the canonical isomorphism $0+0\cong 0$). 
To obtain the cubical 3-cocycle $\zsf$ we have to choose a representative $x_g$ for each $g\in\pi_0(\AAA)$, with $x_0=0$, and for each object $x$ an isomorphism $\iota_x:x\to x_g$ if $[x]=g$, with $\iota_{x_g}$ the identity for every $g$. Then let $\Aa_{sk}$ be the skeletal groupoid whose objects are the elements $g\in\pi_0(\AAA)$, whose morphisms are the elements $(u,g)\in \pi_1(\AAA)\times\pi_0(\AAA)$, with $(u,g):g\to g$, and whose composition law is given by $\hat +$. Let us denote by $H:\Aa_{sk}\to\Aa$ the functor given on objects and morphisms by 
\begin{align}
H(g)&:=x_{g},\label{def_H_objectes}
\\
H(u,g)&:=\delta_{x_g}(u).\label{def_H_morfismes}
\end{align}
Here $\delta_{x_g}:\mathsf{Aut}(0)\to \mathsf{Aut}(x_g)$ stands for the canonical group isomorphism given by
\[
\delta_{x_g}(u)=r_{x_g}\,(id_{x_g}+u)\,r^{-1}_{x_g},
\]
with $r_{x_g}:x_g+0\to x_g$ the right unitor. Its existence follows from Proposition~\ref{equivalencia_AC2grup_2grupsimetric}, which establishes that the underlying groupoid of an AC-2-group is of the same type as the underlying groupoid of a symmetric 2-group. Then $H$ is an equivalence of categories with pseudoinverse $H':\Aa\to\Aa_{sk}$ given by
\begin{align}
H'(x)&:=[x],\label{def_H'_objectes}
\\
H'(f)&:=(\delta^{-1}_{x_g}(\iota_{x'}\,f\,\iota_x^{-1}),[x]).\label{def_H'_morfismes}
\end{align}
for every morphism $f:x\to x'$ in $\Aa$ (hence, $[x]=[x']$). In fact, $H'\circ H$ is literally equal to the identity of $\Aa_{sk}$, and the isomorphisms $\iota_x$ are the components of a natural isomorphism $\iota:id_\Aa\Rightarrow H\circ H'$. By transport of the AC-2-group structure on $\Aa$ to $\Aa_{sk}$ through the equivalence $H'$ the skeletal groupoid $\Aa_{sk}$ becomes a skeletal AC-2-group $\AAA_{sk}$ equivalent to $\AAA$. The resulting AC-2-group $\AAA_{sk}$ turns out to be the special AC-2-group $\AAA(\Gsf,\Asf,\zsf)$ with $\zsf:\Gsf\times\Gsf\to\Asf$ the cubical 3-cocycle uniquely given by the commutative diagrams
\begin{equation}\label{3-cocicle_z}
\xymatrix@C=3pc@R=3pc{
(x_{g_1}+x_{g_2})+(x_{g_3}+x_{g_4})\ar[rr]^-{\iota_{x_{g_1}+x_{g_2}}+\iota_{x_{g_3}+x_{g_4}}}\ar[d]|{b(x_{g_1},x_{g_2},x_{g_3},x_{g_4})} 
&& 
x_{g_1+g_2}+x_{g_3+g_4}\ar[rr]^-{\iota_{x_{g_1+g_2}+x_{g_3+g_4}}} 
&& 
x_{g_1+g_2+g_3+g_4}\ar[d]|{H(\zsf(g_1,g_2,g_3,g_4),g_1+g_2+g_3+g_4)} 
\\
(x_{g_1}+x_{g_3})+(x_{g_2}+x_{g_4})\ar[rr]_-{\iota_{x_{g_1}+x_{g_3}}+\iota_{x_{g_2}+x_{g_4}}}
&& 
x_{g_1+g_3}+x_{g_2+g_4}\ar[rr]_-{\iota_{x_{g_1+g_3}+x_{g_2+g_4}}} 
&& 
x_{g_1+g_3+g_2+g_4}
}
\end{equation}
for every $g_1,g_2,g_3,g_4\in\pi_0(\AAA)$. Notice that, as a consequence of the possible non-skeletal character of the original AC-2-group, we may have $\zsf(g_1,g_2,g_3,g_4)\neq 0$ even if the associo-commutator $b(x_{g_1},x_{g_2},x_{g_3},x_{g_4})$ is the identity. The cubical 3-cocycle condition on $\zsf$ follows from the $4\times 4$ axiom on $b$, while the normalization conditions follow from the unital and normalization axioms. Clearly, the 3-cocycle $\zsf$ depends on the choices of the representative objects $x_g$ and the isomorphisms $\iota_x$. However, it can be shown that making a different choice just leads to a cohomologous 3-cocycle. The details are left to the reader.
\end{proof}

\begin{theorem}\label{classificacio_AC-2-grups}
Up to equivalence, an AC-2-group (and hence, a symmetric 2-group) is completely determined by a third-degree cubical cohomology class $[\zsf]\in\Hsf_{cub}^3(\Gsf,\Asf)$ of some abelian group $\Gsf$ with coefficients in some other abelian group $\Asf$.
\end{theorem}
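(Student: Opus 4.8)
The plan is to read the statement as the assertion that the assignment
\[
\AAA \longmapsto \bigl(\pi_0(\AAA),\,\pi_1(\AAA),\,[\zsf]\bigr)
\]
descends to a bijection between equivalence classes of AC-2-groups and isomorphism classes of triples consisting of two abelian groups together with a class in $\Hsf^3_{cub}$ of the first with coefficients in the second. Most of the ingredients are already available. Proposition~\ref{terna} shows that every AC-2-group $\AAA$ is equivalent to a special one $\AAA(\Gsf,\Asf,\zsf)$, with $\Gsf=\pi_0(\AAA)$, $\Asf=\pi_1(\AAA)$ and $\zsf$ a cubical $3$-cocycle; the construction of \S~\ref{AC2-grup_especial} realizes every triple; and Proposition~\ref{equivalents_quan_z_sim_z'} shows that the equivalence class of $\AAA(\Gsf,\Asf,\zsf)$ depends only on $[\zsf]$. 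First I would record that $\pi_0$ and $\pi_1$ are manifestly invariant under equivalence, and that an isomorphism of triples (isomorphisms $\Gsf\cong\Gsf'$, $\Asf\cong\Asf'$ carrying $[\zsf]$ to $[\zsf']$) transports $\AAA(\Gsf,\Asf,\zsf)$ to one isomorphic to $\AAA(\Gsf',\Asf',\zsf')$ by mere relabelling of objects and morphisms. These observations make the backward assignment $(\Gsf,\Asf,[\zsf])\mapsto[\AAA(\Gsf,\Asf,\zsf)]$ well defined, and its composite with the forward one recovers $\AAA$ by Proposition~\ref{terna}, while the reverse composite recovers $\zsf$ since applying the construction of Proposition~\ref{terna} to the already skeletal $\AAA(\Gsf,\Asf,\zsf)$ with the tautological choices returns $\zsf$ itself.

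The one point not yet in hand is the \emph{converse} of Proposition~\ref{equivalents_quan_z_sim_z'}: if two special AC-2-groups $\AAA(\Gsf,\Asf,\zsf)$ and $\AAA(\Gsf,\Asf,\zsf')$ are equivalent, then $\zsf$ and $\zsf'$ are cohomologous. Note that this single fact subsumes \emph{all} the well-definedness of the forward map, including its independence of the auxiliary choices in Proposition~\ref{terna}: two sets of choices produce cocycles $\zsf,\zsf''$ with $\AAA(\Gsf,\Asf,\zsf)\simeq\AAA\simeq\AAA(\Gsf,\Asf,\zsf'')$, whence $[\zsf]=[\zsf'']$. I would prove it by analysing an AC-equivalence $\FF=(F,F_2,F_1)$ between the two special AC-2-groups and extracting the relevant $2$-cochain. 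Since both are skeletal with object set $\Gsf$ and with $\mathsf{Aut}(0)=\Asf$, the functor $F$ induces an automorphism of $\Gsf$ on objects and an automorphism of $\Asf$; after transporting $\zsf'$ along these (which alters $[\zsf']$ only by the allowed relabelling) I may assume $F$ is the identity on objects and on $\Asf$. Then each structure isomorphism $F_2(x,y)\colon x+y\to x+y$ is an element $\csf(x,y)\in\Asf$; axiom (acf2) forces $\csf(0,x)=\csf(x,0)=0$, so $\csf$ is a cubical $2$-cochain. Finally, using the explicit form (\ref{associo-commutador_especial}) of the associo-commutators together with the composition law (\ref{composicio}), axiom (acf1) becomes exactly the coboundary relation (\ref{cond_covora}), i.e. $\zsf'-\zsf=\partial\csf$, so that $[\zsf]=[\zsf']$.

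Combining these facts, the forward and backward assignments are mutually inverse, which is the content of the theorem; the translation through Proposition~\ref{equivalencia_AC2grup_2grupsimetric} then yields the parenthetical statement for symmetric $2$-groups. I expect the main obstacle to be the reduction carried out at the start of the second paragraph: an arbitrary equivalence need neither be strictly unital nor respect the chosen representatives, so some care is required to normalize $F$ to the identity on objects and on $\pi_1$ before $F_2$ can be read off cleanly as a cubical $2$-cochain, and one must check that this normalization only moves $[\zsf']$ within its isomorphism class of triples. Once the normalization is in place, the identification of (acf1) with the coboundary condition is a direct computation of the same type as, but converse to, the one in Proposition~\ref{equivalents_quan_z_sim_z'}.
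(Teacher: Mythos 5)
Your proposal is correct, but it proves strictly more than the paper does: the paper's entire proof of Theorem~\ref{classificacio_AC-2-grups} is a one-line citation of Propositions~\ref{terna} and \ref{equivalents_quan_z_sim_z'}, reading ``completely determined by'' in the weak sense that every AC-2-group gives rise to such a class and that cohomologous cocycles yield equivalent AC-2-groups. The converse direction that you prove --- an equivalence $\AAA(\Gsf,\Asf,\zsf)\simeq\AAA(\Gsf',\Asf',\zsf')$ forces the cocycles to match up to isomorphisms of $\Gsf,\Asf$ --- is precisely what the paper defers, both inside the proof of Proposition~\ref{terna} (independence of the choices of $x_g$ and $\iota_x$) and in the remark following the theorem, each time with ``details left to the reader.'' Your observation that this single converse subsumes the choice-independence in Proposition~\ref{terna} is a genuine economy over the paper's presentation, which treats the two issues separately and proves neither. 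The extraction of the $2$-cochain is also the right computation: skeletality forces $Fx+Fy=F(x+y)$ and $F0=0$ on the nose, so $F$ is a group automorphism on objects; naturality of $F_2$ shows the induced maps on automorphism groups assemble into a single automorphism $f$ of $\Asf$ independent of the object; and after transport, axiom (acf1) unwinds via (\ref{composicio}) and (\ref{associo-commutador_especial}) to exactly the coboundary relation (\ref{cond_covora}), giving $\zsf'-\zsf=\partial\csf$.

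One local correction: after normalizing $F$ to be the identity on objects and on $\Asf$, the unit constraint $F_1$ is still an arbitrary element $a_0\in\Asf=\mathsf{Aut}(0)$, and axiom (acf2) then yields $\csf(0,x)=\csf(x,0)=-a_0$ rather than $0$, so your claim that (acf2) alone forces $\csf$ to be a normalized cubical $2$-cochain is not quite right as stated. The fix is harmless: replace $\csf$ by $\csf+a_0$, which is normalized and has the same coboundary since the constant $2$-cochain has $\partial(a_0)=a_0+a_0-a_0-a_0-a_0+a_0=0$; equivalently, first adjust $\FF$ by the AC-natural isomorphism with constant components $(-a_0,x)$, which replaces $F_1$ by the identity and $F_2$ by the shifted cochain. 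You correctly anticipated that the normalization of $F$ was the delicate point, but this extra step for $F_1$ should be made explicit.
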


\begin{proof}
It is a consequence of Propositions~\ref{terna} and \ref{equivalents_quan_z_sim_z'}.
\end{proof}

In fact, different cohomology classes $[\zsf],[\zsf']\in\Hsf_{cub}^3(\Gsf,\Asf)$ may correspond to equivalent AC-2-groups. More generally, the special AC-2-groups $\GG(\Gsf,\Asf,\zsf)$ and $\GG(\Gsf',\Asf',\zsf')$ are equivalent if and only if there exist group isomorphisms $g:\Gsf\to\Gsf'$ and $f:\Asf\to\Asf'$ such that $g^*(\zsf')\sim f_*(\zsf)$ in $\Hsf^3(\Gsf,\Asf')$. The situation is completely analogous to the classification of arbitrary 2-groups in terms of the usual group cohomology classes (see \cite{Baez-Lauda-2004}). The details are left to the reader. 

\subsection{}\label{comentari_final}
According to Theorem~\ref{classificacio_AC-2-grups}, AC-2-groups and hence, symmetric 2-groups can be classified by cubical cohomology classes in degree three. However, the usual classification of symmetric 2-groups, due to Sinh \cite{Sinh-1975}, is given in terms of  Eilenberg-MacLane level two abelian group cohomology in degree three, whose 3-cocycles consist of pairs $(\hsf,\csf)$ with $\hsf:G^3\to A$ a usual (normalized) 3-cocycle of $\Gsf$ with values in $\Asf$ as a trivial $\Gsf$-module, and $\csf:G^2\to A$ a skew-symmetric map such that
\begin{align*}
\csf(g_2,g_3)-\csf(g_1+g_2,g_3)+\csf(g_1,g_3)&=-\hsf(g_1,g_2,g_3)+\hsf(g_1,g_3,g_2)-\hsf(g_3,g_1,g_2),
\\
\csf(g_1,g_3)-\csf(g_1,g_2+g_3)+\csf(g_1,g_2)&=\hsf(g_1,g_2,g_3)-\hsf(g_2,g_1,g_3)+\hsf(g_2,g_3,g_1).
\end{align*}
In fact, Eilenberg and MacLane proved \cite{Eilenberg-MacLane-1951} that the cubical complex and the complex from which Sinh's 3-cocycles arise are homologically equivalent, though without providing an explicit equivalence. The present work can be interpreted as a categorical realization of this equivalence in degree three.

\noindent
\bibliography{bibliografia}{}

\begin{thebibliography}{10}

\bibitem{Aguiar-Mahajan-2010}
Marcelo Aguiar and Swapneel Mahajan.
\newblock Monoidal functors, species and {H}opf algebras.
\newblock {\em CRM Monograph Series}, 2010.

\bibitem{Baez-Lauda-2004}
J.~Baez and A.~Lauda.
\newblock Higher dimensional algebra {V}: 2-groups.
\newblock {\em Theory Appl. Categ.}, 12(14):423--491, 2004.

\bibitem{Davydov-Runkel-2015}
Alexei Davydov and Ingo Runkel.
\newblock An alternative description of braided monoidal categories.
\newblock {\em Applied Categorical Structures}, 23(3):279--309, 2015.

\bibitem{Dorsen-Petric-2007}
Kosta Do{\v{s}}en and Zoran Petri{\'c}.
\newblock Medial commutativity.
\newblock {\em Annals of Pure and Applied Logic}, 146(2-3):237--255, 2007.

\bibitem{Eilenberg-MacLane-1950-I}
S.~Eilenberg and S.~MacLane.
\newblock Cohomology theory of abelian groups and homotopy theory {I}.
\newblock {\em Proc. Nat. Acad. Sci. USA}, 36:443--447, 1950.

\bibitem{Eilenberg-MacLane-1951}
Samuel Eilenberg and Saunders MacLane.
\newblock Homology theories for multiplicative systems.
\newblock {\em Transactions of the American Mathematical Society},
  71(2):294--330, 1951.

\bibitem{Gurski-Johnson-2025}
Nick Gurski and Niles Johnson.
\newblock Universal pseudomorphisms, with applications to diagrammatic
  coherence for braided and symmetric monoidal functors.
\newblock {\em Compositionality}, 7, 2025.

\bibitem{Jibladze-Pirashvili-2007}
M.~Jibladze and T.~Pirashvili.
\newblock Third {M}ac{L}ane cohomology via categorical rings.
\newblock {\em J. Homotopy and Related Struc.}, 2(2):187--216, 2007.

\bibitem{Joyal-Street-1993}
A.~Joyal and R.~Street.
\newblock Braided monoidal categories.
\newblock {\em Adv. Math.}, 102:20--78, 1993.

\bibitem{MacLane-1958b}
Saunders Mac~Lane.
\newblock Extensions and obstructions for rings.
\newblock {\em Illinois Journal of Mathematics}, 2(3):316--345, 1958.

\bibitem{MacLane-1998}
Saunders Mac~Lane.
\newblock {\em Categories for the working mathematician}, volume~5.
\newblock Springer Science \& Business Media, 1998.

\bibitem{MacLane-1963}
S.~MacLane.
\newblock Natural associativity and commutativity.
\newblock {\em Rice Univ. Studies}, 49:28--46, 1963.

\bibitem{Quang-1987}
N.T. Quang.
\newblock Introduction to {A}nn-categories.
\newblock {\em J. Math. Hanoi}, 15(4):14--24, 1987.

\bibitem{Quang-2013}
N.T. Quang.
\newblock Cohomological classification of {A}nn-categories.
\newblock {\em Math. Communications}, 18:151--169, 2013.

\bibitem{Sinh-1975}
H.X. Sinh.
\newblock Gr-cat\'egories.
\newblock {\em These de Doctorat d'Etat, Univ. Paris VII}, 1975.

\bibitem{Yau-2024-I}
Donald Yau.
\newblock {\em Bimonoidal Categories, $ E\_n $-Monoidal Categories, and
  Algebraic $ K $-Theory: Volume I: Symmetric Bimonoidal Categories and
  Monoidal Bicategories}, volume 283.
\newblock American Mathematical Society, 2024.

\end{thebibliography}
\bibliographystyle{plain}

\end{document}